\documentclass{amsart}

\usepackage{latexsym,amsmath,amssymb,amsfonts,amscd,graphics,appendix,amsxtra}
\usepackage[mathscr]{eucal}
\usepackage[all]{xypic}
\usepackage[normalem]{ulem}

\numberwithin{equation}{section}
\theoremstyle{plain}
\newtheorem{theorem}[equation]{Theorem}

\newtheorem{lemma}[equation]{Lemma}

\newtheorem{proposition}[equation]{Proposition}
\newtheorem{corollary}[equation]{Corollary}

\theoremstyle{remark}
\newtheorem{remark}[equation]{Remark}

\theoremstyle{definition}

\def\Q{\mathbb{Q}}

\newcommand{\bQ}{\mathbb{Q}}
\newcommand{\bZ}{\mathbb{Z}}

\newcommand{\bC}{\mathbb{C}}

\newcommand{\GL}{\mathrm{GL}}

\newcommand{\SL}{\mathrm{SL}}

\title[]{Bhargava Integer Cubes and Weyl Group Multiple Dirichlet Series}

\author{Jun Wen}
\subjclass[2010]{Primary 11M32; Secondary 11S90.}
\keywords{Multiple Dirichlet series, prehomogeneous vector spaces}
\email{jwen@math.umass.edu}
\address{Department of Mathematics and Statistics, University of Massachusetts Amherst}

\begin{document}
\bibliographystyle{alpha}

\maketitle
\begin{abstract}
We investigate the Shintani zeta function associated to a prehomogeneous vector space. The example under consideration is the set of $2 \times 2\times 2$ integer cubes. We show that there are three relative invariants under a certain parabolic group action, they all have arithmetic meanings and completely determine the equivalence classes. We show that the associated Shintani zeta function coincides with the $A_3$ Weyl group multiple Dirichlet series. Finally, we show that the set of semi-stable integer orbits maps finitely and surjectively to a certain moduli space. 
\end{abstract}

\section{Introduction}

The study of Dirichlet series in several complex variables has seen
 much development in recent years. Multiple Dirichlet series (MDS) can arise
 from metaplectic Eisenstein series, zeta functions of prehomogeneous
 vector spaces (PVS), height zeta functions or multiple zeta values.  This
 list is far from exhaustive.  In general, completely different
 techniques are involved in the study of different series arising in
 these different contexts.  Partially for this reason, it is of great
 interest to identify examples of multiple Dirichlet series which lie
 in two or more of the areas above.
 
This paper investigates one such example. We study in detail a
 Shintani zeta function associated to a certain prehomogeneous vector
 space and show that it coincides with a {\em Weyl group multiple Dirichlet
   series} (WMDS) of the form introduced in BBCFH (\cite{BBCFH}). This latter series (in
 three complex variables) is also
 a Whittaker function of a Borel Eisenstein on the metaplectic double
 cover of $\GL_4$.
 
The seminal work of Bhargava \cite{bhargava} generalizing Gauss's composition law on binary quadratic forms starts with investigating the rich structure of integer orbits of $2 \times 2 \times 2$-cubes acted on by $\SL_2(\bZ) \times \SL_2(\bZ)\times \SL_2(\bZ)$, which refers to the $D_4$ case among the list of classification of PVS given in Sato-Kimura \cite{satokimura}. Bhargava shows that the set of projective integer orbits with given discriminant has a group structure and it is isomorphic to the square product of narrow class group of the quadratic order with that discriminant. 

We begin with some definitions. 
Let $G$ be a connected complex Lie group, usually $G$ is a complexification of a real Lie group. A prehomogeneous vector space (PVS) $V$ of $G$, denoted by $(G, V)$, is a complex finite dimensional vector space $V$ together with a holomorphic representation of $G$, such that $G$ has an open dense orbit in $V$. Let $P$ be a complex polynomial function on $V$. We call it a relative invariant of $G$ if $P(g v) =\chi(g) P(v)$ for some rational character $\chi$ of $G$. 
We say the PVS has $n$ relative invariants if $n$ algebraically independent relative invariant polynomials generate the invariant ring. We define the set of semi-stable points $V^{ss}$ to be the subset on which no relative invariant polynomial vanishes. 

Let $V(\bZ) = \bZ^2 \otimes \bZ^2 \otimes \bZ^2$ and consider the action of $ B_2(\bZ) \times B_2(\bZ) \times \SL_2(\bZ)$, where $B_2(\bZ)$ is the subgroup of lower-triangular matrices in $\SL_2(\bZ)$ with positive diagonal elements. We denote $A \in V(\bZ)$ by $\left( \left( \begin{matrix}
a & b  \\
c & d \end{matrix} \right),  \left( \begin{matrix}
e & f  \\
g & h \end{matrix} \right) \right)$ for simplicity. The complex group $ B_2(\bC) \times B_2(\bC) \times \GL_2(\bC)$ acting on $V(\bC)$ is again a PVS and will have three relative invariants. They are $ m(A) = ad- bc, n(A)= ag - ce$ and the discriminant $D(A) =\mathrm{disc}(A)$. 

The primary object of study of PVS is the Shintani zeta function, see \cite{satoshintani} for the introduction and \cite{shintani} for the application to the average values of $h(d)$, the number of primitive inequivalent binary quadratic forms of discriminant $d$. The Shintani zeta function in three variables associated to the PVS of $2 \times 2 \times 2$-cubes is defined to be:
$$Z_{\rm{Shintani}}(s_1, s_2, w)  =  \sum_{ A \in \left( B_2(\bZ) \times B_2(\bZ) \times \SL_2(\bZ) \right) \backslash V^{ss}(\bZ)} \frac{1}{|D(A)|^w |m(A)|^{s_1} |n(A)|^{s_2}} .$$
We denote the partial sum of Shintani zeta function by 
\begin{align*}
Z_{\rm{Shintani}}^{\rm{odd}}(s_1, s_2, w)  =  \sum_{\substack{ A \in \left( B_2(\bZ) \times B_2(\bZ) \times \SL_2(\bZ) \right) \backslash V^{ss}(\bZ) \\ D(A) \ \rm{odd}}} \frac{1}{|D(A)|^w |m(A)|^{s_1} |n(A)|^{s_2}} .
\end{align*}
We will show that it is closely related to another multiple Dirichlet series which arises in the Whittaker expansion of the Borel Eisenstein series on the metaplectic double cover of $\GL_4$, which is found to be the WMDS associated to the root system $A_3$ and it is of the form:
\begin{align*}
Z_{\rm{WMDS}} (s_1, s_2, w)  =  \sum_{ D \  \mathrm{odd} \ \mathrm{discriminant} } \frac{1}{|D|^w} \sum_{m, n>0} \frac{\chi_D(\hat{m}) \chi_D(\hat{n})}{m^{s_1} n^{s_2}}a(D, m, n)
\end{align*}
where $\hat{m}$ denotes the factor of $m$ that is prime to the square-free part of $D$ and $\chi_D$ is the quadratic character associated to the field extension $\bQ(\sqrt{D})$ of $\bQ$. A precise formula of the coefficients $a(D, m, n)$ will be given in section 4. 

The main results of this paper, given in Theorems 1.1, 1.2 and 1.3 below, are as follows:
\begin{itemize}
\item[(1)]We give an explicit description of the Shintani zeta function associated to the PVS of $2 \times 2 \times 2$-cubes. 
\item[(2)]We show how the series is related to the quadratic WMDS associated to the root system $A_3$. 
\item[(3)]We give an arithmetic meaning to the semi-stable integer orbits of the PVS. 
\end{itemize}

Denote by $A(d, a)$ the number of solutions to the congruence $x^2 =d \ (\text{mod} \ a) $.
\begin{theorem}
The Shintani zeta function associated to the $\mathrm{PVS}$ of $2\times 2 \times 2$-cubes is given by:
\begin{align*}
Z_{\rm{Shintani}}(s_1, s_2, w)  &=  \sum_{ A \in \left( B_2(\bZ) \times B_2(\bZ) \times \SL_2(\bZ) \right) \backslash V^{ss}(\bZ)} \frac{1}{|D(A)|^w |m(A)|^{s_1} |n(A)|^{s_2}} \\
& =\sum_{D = D_0 D_1^2\neq 0} \frac{1}{|D|^w} \sum_{m, n > 0} \frac{B(D, m,n)}{m^{s_1}n^{s_2}}, \end{align*}
where
$$B(D, m, n) =\sum_{\substack{d|D_1 \\ d|m,\ d|n}} d \cdot A(D/ d^2, 4m/d) \cdot A(D/d^2, 4n/d).$$
\end{theorem}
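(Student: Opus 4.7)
The plan is to enumerate orbit representatives of $B'_2(\bZ) \times B'_2(\bZ) \times \SL_2(\bZ)$ on $V_\bZ^{ss}$ by putting the first slice $M_1 = \begin{pmatrix} a & b \\ c & d \end{pmatrix}$ into a Hermite normal form. The first point to settle is which group factor acts on which tensor factor: for $m = \det M_1$ and $n = \det N_1$ (with $N_1$ the ``first column''-slice in the orthogonal direction) to both be preserved by the group action, the $\SL_2(\bZ)$-factor must act on the one axis of $\bZ^2 \otimes \bZ^2 \otimes \bZ^2$ that is common to both determinantal slicings. Writing $A = (M_1, M_2)$, this translates into $\SL_2$ acting by $(M_1, M_2) \mapsto (gM_1, gM_2)$, one $B'_2$-factor acting by simultaneous right multiplication by an upper unipotent $R_\epsilon$, and the other by $(M_1, M_2) \mapsto (M_1, \kappa M_1 + M_2)$.

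Using the combined $\SL_2$ and right-$B'_2$ actions, I reduce $M_1$ to $\begin{pmatrix} \alpha & \beta \\ 0 & \delta \end{pmatrix}$ with $\alpha > 0$, $\alpha\delta = m$, and $\beta \in [0, \gcd(\alpha, \delta))$. The subtle point here is that $\beta$ is reduced modulo $\gcd(\alpha, \delta)$, not modulo $|\delta|$, because both left row-operations (from $\SL_2$) and right column-operations (from the right-$B'_2$) can shift it. Since $\alpha$ equals the GCD of the first column of $M_1$ and $n = ag - ce$ depends only on the first columns of $M_1$ and $M_2$, one automatically gets $\alpha \mid n$, so $\alpha \mid \gcd(m, n)$. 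The other $B'_2$-action normalises $e = (M_2)_{11}$ modulo $\alpha$, and a short computation exhibits a residual one-parameter stabilizer that acts on the remaining free data $(f, h)$ by an explicit affine transformation, leaving $h$ well-defined only modulo a specific multiple of $2mn/\alpha$.

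With the canonical form in hand, the equation $D(A) = D$ becomes a quadratic congruence in $h$ modulo $4mn/\alpha$. Reindex by setting $d = \gcd(m, n)/\alpha$ and apply the Chinese Remainder Theorem to decouple the congruences modulo $4m/d$ and $4n/d$; the solution count then factors as $A(D/d^2, 4m/d) \cdot A(D/d^2, 4n/d)$, and the multiplicative $d$ arises from the free residues of $\beta$ and $e$ in the canonical form. The condition $d \mid D_1$ appearing in the outer sum is automatic: if $d \nmid D_1$, then $D/d^2$ fails to be a discriminant and the congruence has no solutions. Summing over the four sign combinations of $(m, n)$ and over all admissible $d$ reproduces $B(D, m, n)$; multiplying by $|D|^{-w} m^{-s_1} n^{-s_2}$ and summing gives the claimed identity.

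The main obstacle is the residual-stabilizer bookkeeping. A naive HNF-plus-reduce-modulo-$\alpha$ count over-counts, because nontrivial combinations of $\SL_2$ with both $B'_2$-factors preserve the canonical form of $M_1$ while acting nontrivially on $(e, f, h)$. Identifying the correct $\gcd(\alpha, \delta)$-range for $\beta$ and tracking the precise coupling between $e$ and $(f, h)$ under this residual action is exactly what produces both the multiplicity $d$ and the bi-local factorisation into the two $A$-factors in the final formula.
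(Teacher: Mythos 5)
There is a concrete error at the step that is supposed to produce the divisor sum. You stratify orbits by $\alpha=\gcd(a,c)$ (the content of the first column of $M_1$ --- which is indeed an orbit invariant dividing $\gcd(m,n)$), set $d=\gcd(m,n)/\alpha$, and claim that the stratum contributes $d\cdot A(D/d^2,4m/d)\,A(D/d^2,4n/d)$. This is false. Take $D=17$, $m=n=2$, so $B(17,2,2)=A(17,8)^2=16$. The cube with $(a,b,c,d)=(1,2,0,2)$ and $(e,f,g,h)=(2,0,2,-1)$ has $m=n=2$, $\mathrm{disc}=17$, and $\alpha=\gcd(1,0)=1$, hence $d=2$ in your indexing; your formula assigns that stratum the count $2\cdot A(17/4,4)^2$, which is vacuous/zero, yet the stratum contains $8$ of the $16$ orbits (those pairs $(x,y)\in\{1,3\}^2$ with $(x+y)/2$ odd). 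The invariant that actually indexes the divisor sum is not $\gcd(m,n)/\gcd(a,c)$ but the imprimitivity of the cube in the \emph{third} direction: after normalizing $c=0$ it is $\ell=\gcd(d,g,h)$, i.e.\ the largest $\ell$ with $\ell^2\mid D$ and $\ell\mid\gcd(m,n)$ such that the orbit is obtained from a primitive orbit of discriminant $D/\ell^2$ and invariants $(m/\ell,n/\ell)$ by applying $\bigl(1,1,\left(\begin{smallmatrix}1&j\\0&\ell\end{smallmatrix}\right)\bigr)$, and the factor $\ell$ counts the choices of $j$, $0\le j\le \ell-1$. For squarefree $D$ every orbit has $\ell=1$ even though $\alpha$ varies over several divisors of $\gcd(m,n)$, so the two stratifications genuinely disagree; your per-stratum counts cannot be repaired by bookkeeping.

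Separately, the actual heart of the theorem --- that for fixed $(D,m,n)$ the primitive orbits are in bijection with pairs of residues $(x \bmod 2m,\ y\bmod 2n)$ solving $x^2\equiv D\ (\mathrm{mod}\ 4m)$ and $y^2\equiv D\ (\mathrm{mod}\ 4n)$ --- is only asserted in your plan (``a short computation exhibits a residual one-parameter stabilizer''), not carried out. This is where the paper does its real work: Lemma 3.3 constructs, for each such pair, a cube with $c=0$ and $\gcd(d,g,h)=1$, and Proposition 3.4 shows that under this primitivity hypothesis the residual upper-triangular action is exactly transitive on the fiber (the relations $\Delta e=gk$, $\Delta b=dk$, $\Delta f=hk$), whereas when $\gcd(d,g,h)=\ell>1$ it is not --- and that failure of transitivity is precisely what generates the $d>1$ terms of $B(D,m,n)$. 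Your proposal defers exactly this point to an unspecified computation, and couples it to the wrong invariant.
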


\begin{theorem}
The Shintani zeta function can be related to the $A_3$-$\mathrm{WMDS}$ in the following way:
\begin{align*}
Z^{\mathrm{odd}}_{\rm{Shintani}}(s_1, s_2, w) = & 4  \frac{\zeta(s_1)}{\zeta(2 s_1)} \frac{\zeta(s_2)}{\zeta(2 s_2)}   
Z_{\rm{WMDS}} (s_1, s_2, w).    
\end{align*}
where $\zeta(s)$ stands for the Riemann zeta function. 
\end{theorem}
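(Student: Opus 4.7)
The plan is to prove the identity by comparing Dirichlet coefficients at each odd discriminant $D$ and matching local factors prime by prime. The first algebraic observation is that the correction factor admits the Euler decomposition
\[
4(1-2^{-2s_1})(1-2^{-2s_2})\zeta(s_1)\zeta(s_2) = 4(1+2^{-s_1})(1+2^{-s_2}) \prod_{p \text{ odd}} \frac{1}{(1-p^{-s_1})(1-p^{-s_2})},
\]
so the theorem reduces to a local identity at each prime (including $p=2$) together with a divisor computation over $D_1$.

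To set up the local comparison, apply Theorem 1.1, restrict the outer sum to odd $D$, interchange the orders of summation, and substitute $m=dm_1$, $n=dn_1$. This produces
\[
\sum_{m,n\geq 1}\frac{B(D,m,n)}{m^{s_1}n^{s_2}} = \sum_{d\mid D_1} d^{1-s_1-s_2}\left(\sum_{m_1\geq 1}\frac{A(D/d^2,4m_1)}{m_1^{s_1}}\right)\left(\sum_{n_1\geq 1}\frac{A(D/d^2,4n_1)}{n_1^{s_2}}\right).
\]
Since $D_1$ is odd, every $d$ in the outer sum is odd. Writing $m_1=2^am_1'$ with $m_1'$ odd and invoking the Chinese Remainder Theorem gives $A(D/d^2,4m_1)=A(D/d^2,2^{a+2})\cdot A(D/d^2,m_1')$, so each one-variable Dirichlet series factors cleanly into a $2$-adic part and an Euler product over odd primes, while the sum over $d\mid D_1$ factors over the odd prime divisors of $D_1$. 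Crucially, because $d^2\equiv 1\pmod 8$ for every odd $d$, the class $D/d^2\bmod 8$ equals $D\bmod 8$, so the $2$-adic calculation does not depend on $d$.

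The third step is the explicit local evaluation. At $p=2$, since $D/d^2\equiv 1\pmod 4$, the values $A(D/d^2,2^k)$ follow the classical pattern ($A=2$ for $k=2$; $A$ equals $4$ or $0$ for $k\geq 3$ according to $D\bmod 8$). Summing the geometric series yields the local factor $\frac{4(1+2^{-s_1})(1+2^{-s_2})}{(1-2^{-s_1})(1-2^{-s_2})}$ if $D\equiv 1\pmod 8$ and $4$ if $D\equiv 5\pmod 8$, which matches $4(1+2^{-s_1})(1+2^{-s_2})$ times the $2$-part of $Z_{\rm{WMDS}}$ once one inputs $\chi_D(2)=1$ or $-1$ respectively. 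At an odd prime $p$ with $p\nmid D$, the elementary identity
\[
\sum_{k\geq 0}\frac{A(D,p^k)}{p^{ks}}=\frac{1-p^{-2s}}{(1-p^{-s})(1-\chi_D(p)p^{-s})}
\]
shows that the $p$-local factor of the left-hand side is exactly $\zeta_p(s_1)\zeta_p(s_2)$ times the $p$-local factor of $Z_{\rm{WMDS}}$. At an odd ramified prime $p\mid D$, the local factor of the left-hand side also absorbs the contributions $d=p^e$ with $0\leq e\leq v_p(D_1)$, and Hensel-style evaluation of $A(D/p^{2e},p^k)$ produces a finite rational function in $p^{-s_1},p^{-s_2}$ to be compared with the local factor of $a(D,m,n)$ given in Section~4.

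The main obstacle is the last local identity at ramified odd primes. The coefficients $a(D,m,n)$ are defined by the twisted multiplicativity reflecting their origin as Whittaker coefficients of a metaplectic Eisenstein series, and their form at ramified primes is genuinely nontrivial; verifying that the $d$-sum in $B(D,m,n)$ reproduces this local factor requires bookkeeping the $p$-adic valuations of $D_0$ and $D_1$ separately, and splitting into cases according to whether $p$ divides $D_0$, $D_1$, or both. Once this local matching is completed and the Euler factors reassembled over all primes, the claimed identity holds in the common region of absolute convergence of the two series.
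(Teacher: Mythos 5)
Your reduction is set up correctly and agrees with the paper's route up to a point: the substitution $m=dm_1$, $n=dn_1$ in Theorem 1.1 giving
\[
\sum_{m,n\geq 1}\frac{B(D,m,n)}{m^{s_1}n^{s_2}} \;=\; \sum_{d\mid D_1} d^{\,1-s_1-s_2}\left(\sum_{m_1\geq 1}\frac{A(D/d^2,4m_1)}{m_1^{s_1}}\right)\left(\sum_{n_1\geq 1}\frac{A(D/d^2,4n_1)}{n_1^{s_2}}\right)
\]
is exactly the decomposition the paper uses, your $2$-adic evaluation is correct, and the observation $D/d^2\equiv D\pmod 8$ for odd $d$ is the same one the paper needs. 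However, there is a genuine gap at the step you yourself flag as ``the main obstacle.'' You propose to compute, at each ramified odd prime, the local factor of the left-hand side as an explicit rational function and then compare it with ``the local factor of $a(D,m,n)$'' --- but you never carry this out, and this comparison is the entire content of the theorem; everything else is routine. Moreover, the route you sketch for closing it is harder than necessary and misreads the structure of the target: $a(D,m,n)$ is not defined by a local formula at ramified primes that you must reverse-engineer, but by the divisor sum $a(D,m,n)=\sum_{d\mid D_1,\,d\mid m,\,d\mid n} d\cdot a(D/d^2,m/d)\,a(D/d^2,n/d)$, which mirrors $B(D,m,n)$ term for term. Once you notice this (the paper's Lemma 4.1 and Proposition 4.2), the two $d$-sums pair off, and each summand reduces to the one-variable identity
\[
\sum_{m>0}\frac{A(D',4m)}{m^{s}}=\tilde P_2(D',s)\,\zeta(s)\sum_{m>0}\frac{\chi_{D'}(\hat m)\,a(D',m)}{m^{s}}
\]
applied with $D'=D/d^2$; that identity (Proposition 2.5) already contains the ramified-prime analysis via the prime-power residue counts, so no new local computation is required. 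Without either importing Proposition 2.5 or actually performing the ramified case analysis you defer, your argument does not establish the theorem; it establishes the unramified and $2$-adic parts and records the hard part as a task.
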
 

Lastly, we give the arithmetic meaning to the semi-stable integer orbits of the PVS by showing that:
\begin{theorem}
There is a natural map, which is a surjective and finite morphism,
$$\left( B_2(\bZ) \times B_2(\bZ) \times \SL_2(\bZ) \right)\backslash V^{ss}(\bZ) \to \mathrm{Iso} \backslash \{(R;I_1, I_2): R/I_1 \cong \mathrm{N}(I_1)\bZ, R/I_2 \cong \mathrm{N}(I_2)\bZ \},$$
where $R$ is an oriented quadratic ring and $I_i's$ are the oriented ideals in $R$ with the norm $\mathrm{N}(I_i)$. The cardinality $ n( R; I_1, I_2)$ of the fiber is equal to
$$\sigma_1 \left( g.c.d.(D_1, a_1, a_2)\right),$$
where $D =D_0 D_1^2= \mathrm{disc}(R)$ with $D_0$ is square-free, and $a_i=\mathrm{N}(I_i)$.
It further satisfies
$$\sum _{\substack{(R; I_1, I_2)/\sim \\ \mathrm{N}(I_i)=a_i}} n (R; I_1, I_2)  = B(D, a_1, a_2).$$
\end{theorem}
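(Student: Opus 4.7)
The strategy is to start from Bhargava's correspondence between $\SL_2(\bZ)^3$-orbits of semi-stable $2 \times 2 \times 2$ integer cubes and isomorphism classes of balanced triples $(R; I_1, I_2, I_3)$, where $R$ is an oriented quadratic ring of discriminant $D(A)$ and $I_1 I_2 I_3 \cong R$, and to refine it to the parabolic group $(B_2')^2 \times \SL_2$. Concretely, I would associate to a cube $A$ three binary quadratic forms $Q_1, Q_2, Q_3$, one for each axis, by taking determinants of pencils of face matrices. If $Q_i(x,y) = a_i x^2 + b_i xy + c_i y^2$ has discriminant $D = D(A)$, set $I_i = \langle a_i, \tfrac{-b_i + \sqrt{D}}{2}\rangle_{\bZ} \subset R$. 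The map of the theorem sends the parabolic orbit of $A$ to the isomorphism class of $(R; I_1, I_2)$; the leading coefficients satisfy $|a_1| = |m(A)|$ and $|a_2| = |n(A)|$.

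Well-definedness requires checking that the $\SL_2(\bZ)$-action on the third axis does not affect $(R; I_1)$ or $(R; I_2)$, and that a $B_2'$-transformation on axis $i \in \{1,2\}$ only changes $Q_i$ within its $\SL_2(\bZ)$-equivalence class, preserving the class of $I_i$. The cyclicity condition $R/I_i \cong N(I_i)\bZ$ is immediate from the definition: writing $R = \bZ + \bZ \tau$, one finds $R/I_i$ is generated by $1$ with unique relation $a_i \cdot 1 = 0$. Surjectivity follows from Bhargava's inverse construction: given $(R; I_1, I_2)$ in the target, choose an ideal $I_3 \subset R$ scaled so that $I_1 I_2 I_3 = R$ (balanced), and build the cube from the triple $(R; I_1, I_2, I_3)$.

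The main work is the fiber computation. Fix $(R; I_1, I_2)$ with $N(I_i) = a_i$ and $\mathrm{disc}(R) = D = D_0 D_1^2$. Bhargava's theorem uniquely determines $[I_3]$, so there is a single $\SL_2^3(\bZ)$-orbit of cubes above $(R; I_1, I_2)$, and all parabolic orbits in the fiber lie inside it. The refinement from $\SL_2$ to $B_2'$ on axes $1$ and $2$ amounts to remembering a primitive first basis vector $\alpha_1 \in I_1$ and $\beta_1 \in I_2$, modulo the stabilizer of the cube. I would stratify such pairs $(\alpha_1, \beta_1)$ by a divisor $d$: the largest integer such that both $\alpha_1/d$ and $\beta_1/d$ remain in a common suborder $R_d = \bZ + d\,\calO_K$ of $R$. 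The conditions $d \mid a_i$ come from $\alpha_i \in I_i$, while $d \mid D_1$ arises from $R_d \subset R$, so $d$ runs over divisors of $\gcd(D_1, a_1, a_2)$. At each admissible $d$, the parabolic orbits form a torsor of size $d$ under $\bZ/d\bZ$-translations lifting from $R_d$ back to $R$, yielding the total $\sum_{d \mid \gcd(D_1, a_1, a_2)} d = \sigma_1(\gcd(D_1, a_1, a_2))$. The main obstacle is the careful verification of this stratification and the orbit count $d$ at each level, which requires tracking how the stabilizer of the balanced triple $(R; I_1, I_2, I_3)$ in $\SL_2^3(\bZ)$ acts on primitive-vector choices.

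Granted the fiber formula, the identity $\sum n(R;I_1,I_2) = B(D,a_1,a_2)$ follows by reorganizing the sum. For fixed $d \mid \gcd(D_1, a_1, a_2)$, pairs of ideals in the suborder $R_d$ (of discriminant $D/d^2$) with norms $a_1/d$ and $a_2/d$ and cyclic quotients are counted by $A(D/d^2, 4a_1/d) \cdot A(D/d^2, 4a_2/d)$, each cyclic ideal of norm $a$ in a quadratic order of discriminant $D'$ corresponding to a solution of $x^2 \equiv D' \pmod{4a}$. Multiplying by the fiber weight $d$ and summing over $d$ recovers $B(D, a_1, a_2) = \sum_{d \mid \gcd(D_1, a_1, a_2)} d \cdot A(D/d^2, 4a_1/d) \cdot A(D/d^2, 4a_2/d)$, completing the proof.
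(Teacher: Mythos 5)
Your route through Bhargava's balanced-triple correspondence is genuinely different from the paper's, which never invokes that correspondence: the paper constructs a cube with prescribed invariants $(D,m,n,x,y)$ directly (Lemma 3.3), counts the parabolic orbits with those invariants to get the closed formula for $B(D,m,n)$ (Propositions 3.4--3.8), identifies $B_2'(\bZ)$-orbits of forms with pairs $(R,I)$ (Proposition 5.1), and then simply reads the fiber cardinality off the already-established formula for $B(D,m,n)$, observing that the congruence $(b_i/d)^2 \equiv D/d^2 \pmod{4a_i/d}$ is automatic once $b_i^2 \equiv D \pmod{4a_i}$ holds and $d \mid \gcd(D_1,a_1,a_2)$. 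However, your proposal has two genuine gaps. First, the claim that ``Bhargava's theorem uniquely determines $[I_3]$, so there is a single $\SL_2^3(\bZ)$-orbit of cubes above $(R;I_1,I_2)$'' is not justified in the generality needed here: the theorem is about \emph{all} semi-stable cubes, not just projective ones, so the associated forms need not be primitive and the ideals $I_i$ need not be invertible. Existence and uniqueness of a balancing third ideal $I_3$ (and hence both your surjectivity argument and your single-orbit claim) are exactly the delicate points in the non-invertible case, and you cannot take them for granted; the paper sidesteps this entirely by exhibiting the preimage cube explicitly via Lemma 3.3.

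Second, and more seriously, the fiber computation --- which is the actual content of the theorem --- is only sketched. You propose stratifying choices of primitive vectors $(\alpha_1,\beta_1)$ by the largest $d$ with $\alpha_1/d,\beta_1/d$ lying in a suborder $R_d$, assert that each stratum contributes a torsor of size $d$, and then concede that ``the main obstacle is the careful verification of this stratification and the orbit count $d$ at each level.'' That verification is precisely where the divisibility conditions $d\mid D_1$, $d\mid a_1$, $d\mid a_2$ and the weight $d$ must be established, and without it the formula $n(R;I_1,I_2)=\sigma_1(\gcd(D_1,a_1,a_2))$ is unproved. In the paper this work is not done inside Section 5 at all: it is the content of the reduction-theory propositions in Section 3, where the count $\sum_{d}\,d\cdot A(D/d^2,4m/d)A(D/d^2,4n/d)$ is obtained by producing, for each admissible $d$, a base cube $A'$ with invariants $(D/d^2, m/d, n/d)$ and then acting by the $d$ coset representatives $\bigl(1,1,\left(\begin{smallmatrix}1 & j\\ 0 & d\end{smallmatrix}\right)\bigr)$, $0\le j\le d-1$. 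If you want to complete your argument you would need to carry out an equivalent orbit count; as written, the central identity is assumed rather than derived.
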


The organization of this paper is as follows. In section $2$, we will review a double Dirichlet series arising from three different approaches. We show the connection between Shintani's PVS approach and the $A_2$-WMDS approach. In section $3$, we will investigate the structure of integer orbits of the PVS of $2\times 2 \times2$-cubes, using the reduction theory, to derive the main formula in Thm 1.1. In section $4$, we will show its connection to the $A_3$-WMDS. The main idea of the proof is to consider the generating function for the $p$-parts of $a(D, m,n)$. From the construction of an $A_3$-WMDS, the $p$-parts of the rational function which is invariant under the Weyl group action is given explicitly by taking the residue of the convolution of two rational functions of $A_2$ root system. We show it coincides with the generating function of $a(D, m,n)$. In the section $5$, we will show the set of semi-stable integer orbits naturally encodes the arithmetic information by showing it maps finitely and surjectively to the moduli space of isomorphism classes of pairs $(R; I_1, I_2)$, where $R$ is an oriented quadratic ring and $I_i's$ are the oriented ideals in $R$. We will recall the definition of orientation of a quadratic ring and its ideal. We further show that each fiber is counted by a divisor function. 

\subsection*{Acknowledgements}
The author would like to thank Gautam Chinta and Takashi Taniguchi for useful discussions. He is  grateful to Gautam Chinta for making valuable comments. He also thanks the organizers of the ICERM Semester Program on Automorphic Forms, Combinatorial Representation Theory and Multiple Dirichlet Series from January to May of 2013, where some of this work was carried out.

\section{$A_2$ Weyl Group Dirichlet series}
In this section we will introduce three double Dirichlet series and show that they are all essentially the same. The results are not new, Diamantis and Goldfeld in \cite{diamantisgoldfeld} show their relations using a type of converse theorem, and Shintani in \cite[\S 2]{shintani} compares his series to Siegel's, but the computations in section will serve as a prototype for the more involved computations involving 3-variable Dirichlet series in the latter sections. 
\subsection{Siegel Double Dirichlet Series}
The quadratic multiple Dirichlet series first appeared in the paper of  Siegel \cite{siegel}, and its twisted Euler product with respect to one variable was given explicitly. Siegel constructed his series as the Mellin transform of an Eisenstein series of half integral weight on the congruence subgroup $\Gamma_0(4)$. In this section we will first recall the definition of Siegel's double Dirichlet series and then, using the multiplicative property of Euler products, prove it can be expressed as a sum formed from quadratic characters. 

Denote by $A(d, a)$ the number of solutions to the quadratic congruence equation $x^2 = d \ ( \mathrm{mod} \  a)$.
Then Siegel's double Dirichlet series is defined to be:
$$ Z_{\text{Siegel}} (s, w) =  \sum_{d\neq 0} \frac {1}{|d|^w}\sum_{a \neq 0}\frac{A(d, a)}{|a|^s}.$$

For any positive prime integer $p$ and any integer $d$, we define a generating series for the $p$-part of the inner summation of above series to be
\begin{align}
&f_p(d, s)= (1-p^{-s})\sum^{\infty}_{l=0}A(d,p^l) p^{-ls}  \ (p \neq 2) ,\notag \\
&f_2(d, s)= (2^s-1) \sum^{\infty}_{l=1}A(d,2^l)2^{-ls}. \notag
\end{align}
Siegel shows in \cite[\S 4]{siegel} that 
\begin{align*}
\frac{1- \chi_d(p)p^{-s}}{1-p^{-2s}}f_p(d, s)= \begin{cases} p^{\alpha(1-2s)}+ \left(1-\chi_d(p)p^{-s}\right) \sum^{\alpha-1}_{l=0}p^{l(1-2s)} \  &\text{($p\neq2$)}, \\
\frac{1+\chi_d(2)}{1+2^{-s}}+ \left(2^{1-s}-\chi_d(2)\right)\sum^{\alpha}_{l=0}2^{l(1-2s)} \  &\text{($p = 2$)},
 \end{cases}
\end{align*}
where $\chi_d$ is the quadratic character associated to the field $\mathbb{Q}(\sqrt{d})$ and $ p^ {2\alpha}$ is the highest power of $p$ which divides $d/d^{*}$, $d^{*}$ is the fundamental discriminant of the field $\mathbb{Q}(\sqrt{d})$. Now the inner summation of Siegel's double Dirichlet series can be expressed as a normalized quadratic L-function.

\begin{proposition}
Fix an integer $d \neq 0$, then
$$\sum_{a < 0}\frac{ A(d,a)}{|a|^{s}} = \sum_{a > 0}\frac{ A(d,a)}{a^{s}}  = \zeta(2s)^{-1} \zeta (s) L(s, \chi_d) P(d, s) $$
where the last term is 
\begin{align}
P(d,s) =& 2^{-s} \prod_{p \neq 2} \left( p^{\alpha(1-2s)} + (1- \chi_d(p) p^{-s} )\sum^{\alpha-1}_{l=0}p^{l(1-2s)}\right) \notag \\
&\cdot \left( \frac{1+\chi_n(2)}{1+2^{-s}}+\frac{1-\chi_d(2) 2^{-s}}{1+2^{-2s}}(2^{s}-1)+
 \left(2^{1-s}-\chi_n(2) \right)\sum^{\alpha}_{l=0}2^{l(1-2s)} \right). \notag
\end{align}
\end{proposition}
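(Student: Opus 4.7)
The first displayed equality is immediate: the congruence $x^{2}\equiv d\pmod{a}$ depends only on $|a|$, so $A(d,a)=A(d,-a)$ and the sums over $a>0$ and $a<0$ coincide. The substance of the proposition is therefore the identification of $\sum_{a>0}A(d,a)/a^{s}$ with the claimed Euler product expression.

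The plan is to express this sum as an Euler product and then substitute the local formulas already recalled from Siegel. The Chinese Remainder Theorem provides a bijection between solutions of $x^{2}\equiv d\pmod{a_{1}a_{2}}$ and pairs of solutions modulo $a_{1}$ and $a_{2}$ whenever $\gcd(a_{1},a_{2})=1$; hence $a\mapsto A(d,a)$ is multiplicative and
\[
\sum_{a>0}\frac{A(d,a)}{a^{s}}=\prod_{p}\Bigl(\sum_{l\geq 0}A(d,p^{l})\,p^{-ls}\Bigr).
\]
By the definitions of $f_{p}(d,s)$, the local factor at an odd prime $p$ equals $f_{p}(d,s)/(1-p^{-s})$, while the factor at $p=2$ equals $1+2^{-s}(1-2^{-s})^{-1}f_{2}(d,s)$, where the extra summand $1$ accounts for the $l=0$ term that $f_{2}$ omits by definition.

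Next I would substitute Siegel's formula. For odd $p$, factoring $1-p^{-2s}=(1-p^{-s})(1+p^{-s})$ yields
\[
\frac{f_{p}(d,s)}{1-p^{-s}}=\frac{1+p^{-s}}{1-\chi_{d}(p)p^{-s}}\cdot\Bigl(p^{\alpha(1-2s)}+(1-\chi_{d}(p)p^{-s})\sum_{l=0}^{\alpha-1}p^{l(1-2s)}\Bigr).
\]
The first factor is precisely the $p$-th Euler factor of $\zeta(s)L(s,\chi_{d})/\zeta(2s)$, and the second is exactly the $p$-th factor appearing in the product defining $P(d,s)$. Taking the product over odd $p$ therefore reproduces the claimed identity away from $2$.

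The main obstacle is the prime $p=2$, where the bookkeeping is delicate: $f_{2}$ has a different normalization, Siegel's formula carries the extra term $\frac{1+\chi_{d}(2)}{1+2^{-s}}$, and the missing $l=0$ contribution must be reinstated. I would substitute Siegel's $p=2$ formula into $1+2^{-s}(1-2^{-s})^{-1}f_{2}(d,s)$, clear denominators using $1-2^{-2s}=(1-2^{-s})(1+2^{-s})$, and factor out the local Euler factor $(1+2^{-s})/(1-\chi_{d}(2)2^{-s})$ of $\zeta(s)L(s,\chi_{d})/\zeta(2s)$. What remains, after absorbing the reinstated $1$ together with the other contributions over a common denominator and isolating the pure geometric-series piece, should match the three-term parenthesis multiplied by $2^{-s}$ displayed in $P(d,s)$. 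Combining with the odd-prime computation then completes the proof.
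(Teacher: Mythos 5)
Your proposal is correct and follows essentially the same route as the paper: multiplicativity of $A(d,\cdot)$ via the Chinese Remainder Theorem, an Euler product factorization, and substitution of Siegel's local identities, with the paper leaving the $p=2$ bookkeeping just as implicit as you do. One caveat if you actually carry out that last step: the residual factor at $2$ after extracting the Euler factor of $\zeta(2s)^{-1}\zeta(s)L(s,\chi_d)$ comes out to $\frac{1-\chi_d(2)2^{-s}}{1+2^{-s}} + 2^{-s}G_2$ with $\frac{1-\chi_d(2)2^{-s}}{1+2^{-s}} = 2^{-s}\cdot\frac{(1-\chi_d(2)2^{-s})(2^{s}-1)}{1-2^{-2s}}$, so the middle term of the displayed $P(d,s)$ should have denominator $1-2^{-2s}$ rather than $1+2^{-2s}$ --- a typo in the statement (and note the stray $\chi_n$'s should read $\chi_d$), not a flaw in your argument.
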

\begin{proof}
The first equality of the claim follows from the fact that
$$A(d, a) = A(d, -a).$$
For the second equality, first note that the multiplicative property holds
$$A(d,m) A(d,n) = A(d,mn)$$ 
for any pairs of coprime positive integers $m$ and $n$, by the Chinese remainder theorem.

Then the results follows from two equations
$$\frac{1- \chi_n(p)p^{-s}}{1-p^{-2s}}\left(1-p^{-s}\right) \sum_{l=0}^{\infty} A\left(d,p^l\right) p^{-ls} =p^{\alpha(1-2s)}+ \left(1-\chi_n\left(p\right)p^{-s}\right) \sum^{\alpha-1}_{l=0}p^{l\left(1-2s\right)} ,$$
$$\frac{1- \chi_n(2)2^{-s}}{1-2^{-2s}}\left(2^{s}-1\right) \sum_{l=1}^{\infty} A\left(d,2^l\right) 2^{-ls} =\frac{1+\chi_n(2)}{1+2^{-s}}+ \left(2^{1-s}-\chi_n(2)\right)\sum^{\alpha}_{l=0}2^{l(1-2s)} ,$$
as well as the multiplicative property.
\end{proof}

\subsection{Shintani Double Dirichlet Series}
Another approach to the theory of double Dirichlet series is based on the zeta function associated to the prehomogeneous vector space (PVS) developed by M. Sato and Shintani in \cite{satoshintani}. In \cite{shintani}, where the double Dirichlet series associated to the PVS of binary quadratic forms acted on by the Borel subgroup of $\GL_2(\bC)$ is studied in detail, the author obtains the mean values of class numbers of primitive and integral binary quadratic forms. It should be mentioned that in \cite{Hsaito} the essentially same double Dirichlet series is discovered as the zeta function associated to another PVS. In this section, we will recall the Shintani zeta function in two variables arising from the PVS approach. 

Now we let $B_2(\bC)$ be the subgroup of lower-triangular matrices in $\GL_2({\bC})$ and let $\rho$ be the representation of $\GL_2({\bC})$ acting on the three dimensional vector space $U(\bC)=\{Q(u, v)= a u^2 +b uv+ cv^2|(a, b, c )\in \bC^3\}$ of binary quadratic forms 
as follows
$$\rho(g) (Q)(u, v)  = Q(au +cv , bu+dv)$$
where $ g =\left( \begin{array}{cc}
a & b  \\
c & d \end{array} \right)$. It is well known that $(B_2(\bC), U(\bC))$ is a PVS and there are two relative invariants for the action of $ B_2(\bC)$ on $U(\bC)$, namely, the discriminant $\mathrm{disc}(Q)=b^2 -4 ac$ of the quadratic form and $a= Q(1, 0)$. These two invariants freely generate the ring of relative invariants. 

Define $B_2(\bZ)$ to be the Borel subgroup in $\SL_2(\bZ)$ with positive diagonal elements. The Shintani zeta function associated to the PVS $(B_2({\bC}), U(\bC))$ is defined to be
\begin{align*}
Z_{\mathrm{Shintani}}( s, w;B_2)& = \sum_{Q \in B_2(\bZ) \backslash U^{ss}(\bZ)}\frac{1}{|Q(1, 0)|^s |\mathrm{disc}(Q)|^w} \notag \\
&=\sum_{a\neq 0} \frac{1}{|a|^{s}} \sum_{\substack{0 \leq b \leq 2a -1 \\ c: b^2 - 4a c \neq 0}} \frac{1}{|b^2 - 4 a c|^w }. \notag 
\end{align*}
where $U^{ss}(\bZ)$ is the semi-stable subset of (not necessarily primitive) quadratic forms with $Q(1, 0) \neq 0$ and non-zero discriminant.  
Alternatively, we can express the Shintani zeta function as 
$$ \sum _{Q \in \SL_2(\bZ) \backslash U^{ss}(\bZ)} \frac{1}{|\mathrm{disc}(Q)|^{w}} \sum _{\gamma \in B_2(\bZ) \backslash \SL_2(\bZ) /\mathrm{stab}_{Q}}\frac{1}{|\gamma \circ Q(1, 0)|^{s}}.$$
For the rest of this section we suppress the $B_2$ from the notation. 
We also define 
\begin{align*}
Z_{\mathrm{Shintani}}^{\mathrm{odd}}( s, w)& = \sum_{\substack{Q \in B_2(\bZ)  \backslash U^{ss}(\bZ) \\ \mathrm{disc}(Q) \ \mathrm{odd}}} \frac{1}{|Q(1, 0)|^s |\mathrm{disc}(Q)|^w} \notag \\
&=\sum_{a\neq 0} \frac{1}{|a|^{s}} \sum_{\substack{0 \leq b \leq 2a -1 \\ c: b^2 - 4a c \ \mathrm{odd}}} \frac{1}{|b^2 - 4 a c|^w }. \notag 
\end{align*}

\begin{remark}
In section 5, we will show that the orbits in $B_2(\bZ) \backslash U^{ss}_{\rm{primitive}}(\bZ)$ parameterize the isomorphism classes of the pairs $(R, I)$, where $R$ is an oriented quadratic ring and $I$ is an oriented ideal with cyclic quotient in $R$. We will call $(R_1, I_1)$ and $(R_2, I_2)$ isomorphic if there is a ring isomorphism from $R_1$ to $R_2$ preserving the orientation and sending $I_1$ to $I_2$.  
\end{remark}

\begin{lemma}
The Shintani zeta function can be written as
$$Z_{\rm{Shintani}}( s, w)= \xi_1(s, w) + \xi_2(s, w),$$
where
$\xi_i(s, w)= \sum_{a, d > 0 } \frac{A( (-1)^{i-1}d,4a)}{a^s d^w} .$
\end{lemma}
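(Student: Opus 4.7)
The plan is to explicitly reduce the triple sum for $Z_{\rm Shintani}(s,w)$ to a sum indexed by the level $4|A|$ congruence count $A(d,4|A|)$.

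First, I would describe the fundamental domain. Since $B'_2(\bZ)$ consists exactly of matrices $\bigl(\begin{smallmatrix}1 & 0 \\ c & 1\end{smallmatrix}\bigr)$ with $c\in\bZ$, the action on $Q(u,v)=Au^2+Buv+Cv^2$ reads
\[
Q\mapsto Au^2+(B+2Ac)uv+(Ac^2+Bc+C)v^2.
\]
Thus $A$ and $\mathrm{disc}(Q)=B^2-4AC$ are invariants, while $B$ shifts by $2A\bZ$. Hence a fundamental domain on $V_\bZ^{ss}$ for the $B'_2(\bZ)$-action is
$\{(A,B,C):A\neq 0,\ 0\le B\le 2|A|-1,\ B^2-4AC\neq 0\}$, where $C$ ranges over $\bZ$ subject to the nonvanishing of the discriminant.

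Second, I would reparametrize the innermost sum. Fix $A\ne 0$ and $B$, and substitute $d=B^2-4AC$ in place of $C$. The map $C\mapsto d$ is a bijection from $\bZ$ onto $\{d\in\bZ: d\equiv B^2\pmod{4|A|}\}$ (and in particular onto $\{d\ne 0\}$ minus the single point $d=B^2$ when that happens to equal $B^2$). Therefore, for fixed $A\ne 0$,
\[
\sum_{B=0}^{2|A|-1}\;\sum_{\substack{C\in\bZ \\ B^2-4AC\ne 0}}\frac{1}{|B^2-4AC|^w}
\;=\;\sum_{d\ne 0}\frac{N(A,d)}{|d|^w},
\]
where $N(A,d)=\#\{B\in\{0,\dots,2|A|-1\}:B^2\equiv d\pmod{4|A|}\}$.

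Third, I would relate $N(A,d)$ to $A(d,4|A|)$. Because $(B+2|A|)^2\equiv B^2\pmod{4|A|}$, the full set of residues $B\in\{0,\dots,4|A|-1\}$ with $B^2\equiv d\pmod{4|A|}$ decomposes into the two halves $\{0,\dots,2|A|-1\}$ and $\{2|A|,\dots,4|A|-1\}$ of equal cardinality, giving
\[
N(A,d)=\tfrac12 A(d,4|A|).
\]
Finally, I would reassemble: since $A(d,4|A|)$ depends only on $|A|$, grouping $\pm A$ cancels the $1/2$; and splitting the sum over $d\ne 0$ into $d>0$ and $d<0$ (in the latter case writing $d=-d'$ with $d'>0$) yields
\[
Z_{\rm Shintani}(s,w)=\sum_{a,d>0}\frac{A(d,4a)}{a^s d^w}+\sum_{a,d>0}\frac{A(-d,4a)}{a^s d^w}=\xi_1(s,w)+\xi_2(s,w).
\]
There is no real obstacle here; the only subtle point is the factor of $2$ in Step 3, coming from the fact that the fundamental domain runs $B$ only up to $2|A|-1$ whereas $A(d,\,\cdot\,)$ is defined with $B$ running the full length of the modulus $4|A|$.
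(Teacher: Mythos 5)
Your proof is correct and takes essentially the same route as the paper: both identify the fundamental domain $0\le B\le 2|A|-1$ for the unipotent action, count the lattice points via $N(A,d)=\tfrac12 A(d,4|A|)$, and then fold the signs of $A$ and $d$ to arrive at $\xi_1+\xi_2$. Your write-up simply makes explicit the steps the paper compresses into two sentences.
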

\begin{proof}
First note that under the action of $g= \left( \begin{array}{cc} 1 & 0\\ m &1 \end{array} \right)$ on the quadratic form $Q(u, v)  = a u^2 + buv + cv^2$, the middle coefficient $b$ is mapped to $b+ 2 a m$. Given non-zero integers $a$ and $d$, the number of the solutions to $b^2 - 4a c = d$ with $0 \leq d \leq 2a-1$ and $ c\in \bZ$ is equal to $\frac{A(d, 4a)}{2}$.  So we have the equality
$$Z_{\rm{Shintani}}( s, w)  = \frac{1}{2} \sum_{a,d  \neq 0} \frac{A(d, 4a) }{|a|^s |d|^w}.$$
On the other hand, $A(d, 4a) = A(d, -4a) $. Therefore,
$$Z_{\rm{Shintani}}( s, w) =\sum_{a,d  > 0} \frac{A(d, 4a) }{a^s d^w} + \sum_{a,d  > 0} \frac{A(-d, 4a) }{a^s d^w}.$$
The result follows. 
\end{proof}
As a corollary to Proposition 2.1, we can express the inner summation of the Shintani zeta function in terms of a quadratic Dirichlet $L$-function. 
\begin{corollary}
Fix $d \neq 0$. Then we have
$$\sum_{a >0}\frac{ A(d,4a)}{a^{s}} = \zeta(2s)^{-1} \zeta (s) L(s, \chi_d) P'(d, s), $$
where the last term is
\begin{align}
P'(d,s) =& 4^{s} \prod_{p \neq 2} \left( p^{\alpha(1-2s)} + \left(1- \chi_d(p) p^{-s} \right)\sum^{\alpha-1}_{l=0}p^{l(1-2s)} \right)\notag \\
&\cdot \left(\frac{1+\chi_n(2)}{1+2^{-s}}+ \left(2^{1-s}-\chi_n(2)\right)\sum^{\alpha}_{l=0}2^{l(1-2s)} - 
  \frac{1- \chi_n(2)2^{-s}}{1-2^{-2s}}\left(1-2^{-s}\right)\right) .\notag
\end{align}
\end{corollary}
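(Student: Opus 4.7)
The approach is to mimic the proof of Proposition 2.1, with the arithmetic modified to account for the shift $a \mapsto 4a$ in the argument of $A(d, \cdot)$. I begin by factoring via the multiplicativity of $A(d, \cdot)$ on coprime arguments (Chinese remainder theorem, as used in the proof of Proposition 2.1). Writing $a = 2^k m$ with $m$ odd gives $4a = 2^{k+2} m$ with $\gcd(2^{k+2}, m) = 1$, hence $A(d, 4a) = A(d, 2^{k+2}) A(d, m)$. After re-indexing $l = k + 2$, this yields
$$\sum_{a > 0} \frac{A(d, 4a)}{a^s} = 2^{2s}\left(\sum_{l \geq 2} \frac{A(d, 2^l)}{2^{ls}}\right) \prod_{p \neq 2}\left(\sum_{k \geq 0} \frac{A(d, p^k)}{p^{ks}}\right).$$

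For each odd prime $p$, the inner sum is precisely what appears in the proof of Proposition 2.1: Siegel's evaluation of $f_p(d, s) = (1 - p^{-s}) \sum_{l \geq 0} A(d, p^l) p^{-ls}$ yields
$$\sum_{k \geq 0} \frac{A(d, p^k)}{p^{ks}} = \frac{1 - p^{-2s}}{(1 - p^{-s})(1 - \chi_d(p) p^{-s})}\left(p^{\alpha(1-2s)} + (1 - \chi_d(p) p^{-s}) \sum_{l=0}^{\alpha - 1} p^{l(1-2s)}\right).$$
Taking the product over $p \neq 2$ contributes the odd-prime parts of $\zeta(2s)^{-1}$, $\zeta(s)$, $L(s, \chi_d)$, and the odd-prime factor of $P'(d, s)$.

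The heart of the computation is the 2-part. Since $A(d, 1) = A(d, 2) = 1$ (the latter because the congruence $x^2 \equiv d \pmod{2}$ has the unique solution $x \equiv d$ regardless of the parity of $d$), the definition $f_2(d, s) = (2^s - 1) \sum_{l \geq 1} A(d, 2^l) 2^{-ls}$ yields
$$\sum_{l \geq 2} \frac{A(d, 2^l)}{2^{ls}} = \frac{f_2(d, s)}{2^s - 1} - \frac{1}{2^s}.$$
I would then substitute Siegel's formula for $f_2(d, s)$, multiply by $2^{2s}$ together with the 2-local Euler factor of $\zeta(2s)^{-1} \zeta(s) L(s, \chi_d)$ (which simplifies to $(1 + 2^{-s})/(1 - \chi_d(2) 2^{-s})$), and apply the identity $1 - 2^{-2s} = (1 - 2^{-s})(1 + 2^{-s})$ to cancel the $(2^s - 1)$ denominator. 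The two terms of Siegel's right-hand side produce the first two terms inside the 2-part bracket of $P'(d, s)$, while the correction $-1/2^s = -A(d, 2)/2^s$ assembles into the third term $-\frac{1 - \chi_d(2) 2^{-s}}{1 - 2^{-2s}}(1 - 2^{-s})$. The overall factor $4^s$ in $P'(d, s)$ absorbs the shift $2^{2s}$ together with the normalization inherited from the odd-prime computation.

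The main obstacle is the 2-adic algebraic bookkeeping: one has to carefully balance the shift factor $2^{2s}$, the denominator $(2^s - 1)$ from the definition of $f_2$, the three 2-local Euler factors, and Siegel's 2-part formula. The manipulation is routine but delicate, and the specific third term in $P'(d, s)$ only reveals itself after using $(1 - 2^{-s})(1 + 2^{-s}) = 1 - 2^{-2s}$ to collapse the various quotients into a single expression matching the statement.
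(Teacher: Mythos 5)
Your proof is correct and follows essentially the same route as the paper's: factor by multiplicativity, reuse Siegel's odd-prime evaluation verbatim, and correct the $2$-part generating function via $\sum_{l\ge 2} A(d,2^l)2^{-ls} = f_2(d,s)/(2^s-1) - A(d,2)2^{-s}$ with $A(d,2)=1$, which is exactly the identity the paper displays (just multiplied through by $\frac{(1-\chi_d(2)2^{-s})(2^s-1)}{1-2^{-2s}}$). One caveat that applies to your final "bookkeeping" step as much as to the paper's own proof: carrying out the $2$-adic algebra explicitly, the shift $2^{2s}$ combines with $(1+2^{-s})(2^s-1)=2^s(1-2^{-2s})$ to leave an overall prefactor $2^{s}$ (equivalently, an extra $2^{-s}$ multiplying the $2$-part bracket), rather than the $4^{s}$ printed in the statement of $P'(d,s)$, so your assertion that the $4^s$ "absorbs the shift" should be checked rather than taken on faith.
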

\begin{proof}
As $\sum_{a>0} \frac{A(d, 4a)}{a^s} = 4^s \sum_{a >0}\frac{ A(d,4a)}{(4a)^{s}}$, by the proof of proposition 2.1, we need only to correct the generating function at prime $p=2$ in order to incorporate the factor $4$, in which case the generating function should be
$$ \sum_{l=0}^{\infty} A(d,4 \times 2^l) 2^{-ls}= 4^s \sum_{l=2}^{\infty} A(d, 2^l) 2^{-ls}.$$
While the function on the right hand side satisfies
\begin{align}
\frac{1- \chi_d(2)2^{-s}}{1-2^{-2s}}(2^{s}-1) \sum_{l=2}^{\infty} A(d,2^l) 2^{-ls}=  & \frac{1- \chi_d(2)2^{-s}} {1- 2^{-2s}} f_2(d, s) \notag \\
& - \frac{1- \chi_n(2)2^{-s}}{1-2^{-2s}}(2^{s}-1) A(d, 2) 2^{-s}  .\notag 
\end{align}
Note that $A(d, 2) =1$, therefore, using the multiplicative property of $A(d,  \cdot)$, we have
$$\sum_{a>0} \frac{A(d, 4a)}{a^s} = 4^s \sum_{a >0}\frac{ A(d,4a)}{(4a)^{s}}  =  \zeta(2s)^{-1} \zeta (s) L(s, \chi_d) P'(d, s).$$
The result follows. 
\end{proof}

\subsection{$A_2$-Weyl Group Multiple Dirichlet Series}
Weyl group multiple Dirichlet series are a class of multiple Dirichlet series coming from Eisenstein series on metaplectic groups.  The simplest example is the quadratic $A_2$ Weyl group double Dirichlet series (see \cite{chintagunnells}),  it is defined as
$$ Z_{A_2}(s,w) =\sum_{\substack{ m > 0,  \\ D \ \mathrm{odd} \  \mathrm{discriminant}}} \frac{\chi_D(\hat{m})}{m^s |D|^w}a(D, m), $$
where $\hat{m}$ is the factor of $m$ that is prime to the square-free part of $D$ and $\chi_D$ is the quadratic character associated to the field extension $\Q(\sqrt{D})$ of $\bQ$. Moreover, the multiplicative factor $a(D, m)$ is defined by
$$a(D,m) = \prod_{ p^k||D,p^l||m}a(p^k, p^l)$$ 
and
$$a(p^k, p^l) = \begin{cases} \min(p^{k/2},p^{l/2})  & \mbox{if}\  \min(k,l)\  \mbox{is even}, \notag \\
   0 & \mbox{otherwise}. \notag \end{cases}$$
Then the relation between Shintani's zeta function $Z(s, w)$ and $Z_{A_2}(s, w)$ is implied by
\begin{proposition}
Fix $D$ an odd discriminant. Then
$$  \sum_{m>0} \frac{A(D, 4m)}{ m^s} =2  \frac{\zeta(s)}{\zeta(2s)} \sum_{m>0} \frac{\chi_D(\hat{m}) a(D, m) }{ m^s}.$$ 
\end{proposition}
\begin{proof}
Analogous to the $p$-part formula of $a(p^k, p^l)$, we will show that for an odd prime $p$ 
$$A(p^k, p^l)  = \begin{cases} 2 a(p^k, p^l) &\text{if} \ k<l, \\ p^{\lfloor l /2 \rfloor} &\mbox{otherwise}. \end{cases}$$
First consider the case when $ p \neq 2 $.  If $k<l$ and $k$ is an odd integer, then the congruence equation $x^2= p^k (\text{mod} \ p^l) $  reduces to the equation of $x^2 =  p  (\text{mod} \ p^i) $  for some power $i$ of $p$ and there is no solution to it; while when $k$ is an even integer, the congruence equation $x^2 = p^k ( \text{mod} \ p^l)$ reduces to the equation of or $ x^2 =  1  (\text{mod} \ p^i)$ and there are two solutions to it. In both case, the number of solutions are both equal to the value of $2 a(p^k, p^l)$ by its definition.

If on the other hand $k \geq l$, then the set of solutions to the congruence equation $x^2 = p^k (\text{mod} \ p^l)$ is the set of multipliers of $p^{\left\lceil \frac{l}{2} \right\rceil}$, so the number of distinct solutions mod $p^l$ is $p^{\left\lfloor \frac{l}{2} \right\rfloor}$.

Therefore, for odd prime $p$,
$$ A(p^k,p^l ) =\chi_{p^k} (\hat{p}^l) a(p^k,p^l) + \chi_{p^k} (\hat{p}^{l-1}) a(p^k, p^{l-1}) =a(p^k,p^l) + a(p^k, p^{l-1}),  $$
where we set the term $ a(p^k, p^{l-1})$ equal to $0$ when $l=0$.

Next, using Hensel's lemma, an integer $d$ relatively prime to an odd prime $p$ is a quadratic residue modulo any power of $p$ if and only if it is a quadratic residue modulo $p$. In fact, if an integer $d$  is prime to the odd prime $p$, as
$$A(d, p^l) =2 \iff  \chi_d(p) =1 \iff A(d, p) =2, $$ 
so
$$A(d, p^l) = \chi_d(p^l) + \chi_d(p^{l-1}). $$

By the prime power modulus theory \cite{gauss}, if the modulus is $p^l$,
then $p^kd$ is a
$$\begin{cases}
\text{quadratic residue modulo} \ p^l\  \text{if} \  k \geq l, \\
 \text{non-quadratic residue modulo}\  p^l\  \text{if}\  k < l \ \text{is odd}, \\
\text{quadratic residue modulo} \ p^l \ \text{if}\  k < l \ \text{ is even and}\  d \ \text{is a quadratic residue},\\
\text{non-quadratic residue modulo} \ p^l\  \text{if}\  k < l \ \text{ is even and otherwise}.
\end{cases}$$
Therefore, for an odd integer $d$ prime to $p\neq 2$, we have
$$A(dp^k, p^l) = \begin{cases}  0 & \chi_{d}(p^l) = \mbox{-1 and} \  k<l \ \text{even} ,\\  A(p^k,p^l) &\mbox{otherwise}. \end{cases}$$
In the former case, we have:
\begin{equation*}\tag{1}
A(dp^k, p^l) =0= \chi_{d p^k}(\hat{p}^l) a(dp^k, p^l) + \chi_{d p^k}(\hat{p}^{l-1}) a(dp^k, p^{l-1}).
\end{equation*}
In the latter case, we also have:
\begin{equation*} \tag{2}
A(dp^k, p^l)  = A(p^k, p^l)= \chi_{d p^k}(\hat{p}^l) a(dp^k, p^l) + \chi_{d p^k}(\hat{p}^{l-1}) a(dp^k, p^{l-1}).
\end{equation*}

Now let $D$ be an arbitrary odd discriminant. Given an prime integer $p$, write $D= D_0 p^k$, where $D_0$ is prime to $p$. Then from the equality $(1)$ and $(2)$ with $d$ replaced by $D_0$,
it follows that
$$ \sum_{l=0}^{\infty} A(D, p^l) p^{-ls}  =(1-p^{-2s}) (1-p^{-s})^{-1} \sum_{l=0}^{\infty} \chi_{D}(\hat{p}^l) a(D, p^l) p^{-ls}.$$  

For $p=2$, we define $\tilde{P}_2(D, s)$ by equating
\begin{align} \sum_{l =0}^{\infty} \frac{A(D,  2^{l+2})}{2^{ls} } &=\tilde{P}_2(D, s)  (1- 2^{-2s})  (1-2^{-s})^{-1} \sum_{l=0}^{\infty}\frac{ \chi_D(2^l) }{2^{ls}} \notag \\
&=\tilde{P}_2(D, s)  (1- 2^{-2s}) (1-2^{-s})^{-1} \sum_{l=0}^{\infty}\frac{ \chi_D(2^l) a(D, 2^l)}{ 2^{ls}}. \notag
\end{align}

By the multiplicative property of $A(D, \cdot)$ and that of $\chi_D(\cdot) a(D, \cdot)$, 
 $$ \sum_{m>0} \frac{A(D, 4m)}{ m^{s}} =  \tilde{P}_2(D, s) \zeta(2s)^{-1} \zeta(s) \sum_{m>0} \frac{\chi_D(\hat{m}) a(D,m)}{ m^{s}} .$$

It remains to compute $\tilde{P}_2$. This is obtained by the next lemma.

\begin{lemma}
Let $D$ be an odd discriminant. With $\tilde{P}_2(D, s)$ defined in the last proposition, then
\begin{align*}
\tilde{P}_2(D, s)  = 2.
\end{align*}
\end{lemma} 
\begin{proof}
Write 
$$ \sum_{l =0} \frac{A(D,  2^{l+2})}{2^{ls} } = A(D, 4) +  \sum_{l =1} \frac{A(D,  2^{l+2})}{2^{ls} } ,$$
and note that 
$$A(D, 4)  = \begin{cases} 2 & D  \equiv1 \  \text{or} \ 5  \ (\text{mod} \  8), \\ 0 & \mathrm{otherwise}. \end{cases}$$
To simplify the second term, note that if $D$ is an odd integer and $m = 8,16$, or some higher power of $2$, then $D$ is a quadratic residue modulo $m$ if and only if $D \equiv1 (\text{mod} \  8)$, therefore for $l \geq 1$,
$$ A(D, 2^{l+2}) = \begin{cases} 4& D  \equiv1 \ (\text{mod} \  8) ,\\  0 & \text{otherwise} .\end{cases}$$
Also note that 
$$ \chi_D(2)  = \begin{cases} 1& D  \equiv1 \ (\text{mod} \  8) , \\ -1 & D \equiv 5 \ (\mathrm{mod} \ 8),\\  0 & \text{otherwise} .\end{cases}$$
Then direct computation gives the results. 
\end{proof}
This finishes the proof of the proposition. 
\end{proof}

\begin{theorem}
The Shintani zeta function $Z_{\mathrm{Shintani}}^{\mathrm{odd}}(s,w)$ and the quadratic $A_2$ Weyl group multiple Dirichlet series $Z_{A_2}(s, w)$ satisfy the equation:
\begin{align*}
Z_{\rm{Shintani}}^{\mathrm{odd}} (s, w)  =& 2 \zeta(2s)^{-1} \zeta(s) Z_{A_2}(s, w).  
\end{align*}
\end{theorem}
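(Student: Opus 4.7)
The plan is to derive the identity from the two preceding results in a purely bookkeeping fashion. First I would invoke Lemma 2.3, restricted to odd discriminant, to rewrite
\[
Z_{\mathrm{Shintani}}^{\mathrm{odd}}(s,w) \;=\; \sum_{\substack{a,d>0 \\ d\text{ odd}}}\frac{A(d,4a)}{a^{s}d^{w}} \;+\; \sum_{\substack{a,d>0 \\ d\text{ odd}}}\frac{A(-d,4a)}{a^{s}d^{w}}.
\]
I would then exchange the order of summation so that the outer sum is over the odd integer $D=\pm d$ and the inner sum is over $a>0$.

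Next I would apply Proposition 2.5 to each inner sum. For each fixed odd $D \neq 0$ it says
\[
\sum_{a>0}\frac{A(D,4a)}{a^{s}} \;=\; \tilde{P}_2(D,s)\,\zeta(s)\sum_{m>0}\frac{\chi_D(\hat{m})\,a(D,m)}{m^{s}},
\]
and by Lemma 2.6 the factor $\tilde{P}_2(D,s)$ equals $2(1-2^{-s})(1+2^{-s})$ when $D\equiv 1\pmod 4$ and vanishes otherwise. Pulling this common factor and $\zeta(s)$ outside, the expression becomes
\[
2(1-2^{-s})(1+2^{-s})\,\zeta(s)\sum_{\substack{D\text{ odd}\\ D\equiv 1(4)}}\frac{1}{|D|^{w}}\sum_{m>0}\frac{\chi_D(\hat{m})\,a(D,m)}{m^{s}},
\]
where the outer sum now runs over all odd integers $D$ (positive or negative) that are congruent to $1$ modulo $4$.

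The last step is to recognize that the condition ``$D$ is an odd integer with $D\equiv 1\pmod 4$'' is precisely the condition ``$D$ is an odd discriminant'' appearing in the definition of $Z_{A_2}(s,w)$: a positive $D\equiv 1\pmod 4$ comes from $d>0$, $d\equiv 1\pmod 4$ in $\xi_1$, while a negative $D=-d\equiv 1\pmod 4$ comes from $d>0$, $d\equiv 3\pmod 4$ in $\xi_2$, so the two partial sums in Lemma 2.3 together sweep out all odd discriminants exactly once. Matching the remaining inner double sum against the definition of $Z_{A_2}(s,w)$ yields the claimed identity.

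The real content is entirely inside Proposition 2.5 and Lemma 2.6; the remaining obstacle is only the modest bookkeeping of congruence conditions in the preceding paragraph, namely checking that the disjoint union of $\{d>0:d\equiv 1\,(4)\}$ and $\{-d:d>0,\ d\equiv 3\,(4)\}$ is exactly the set of odd discriminants, and that the sign of $D$ is correctly tracked by the partition into $\xi_1$ and $\xi_2$. Once this matching is made explicit, the theorem follows immediately.
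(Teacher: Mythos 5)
Your proposal is correct and is essentially the argument the paper intends: the theorem is stated as an immediate consequence of Lemma 2.3 (restricted to odd $d$), Proposition 2.5 applied to each inner sum, and Lemma 2.6 identifying $\tilde{P}_2(D,s)$ as the constant $2(1-2^{-s})(1+2^{-s})$ on $D\equiv 1\pmod 4$ and zero otherwise. Your explicit bookkeeping of the signs of $D$ across $\xi_1$ and $\xi_2$ and the identification of odd discriminants with odd integers congruent to $1$ modulo $4$ correctly fills in the details the paper leaves implicit.
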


\section{Bhargava integer cubes and zeta functions} 
\subsection{Prehomogeneous vector space of a parabolic subgroup.}
Let  $V(\mathbb{Z})$ be the set of $2\times 2 \times 2$ integral matrices. For each element $A \in V(\bZ)$, there are three ways to form pairs of matrices by taking the opposite sides out of $6$ sides. Denote them by
\begin{align}
&M_A^1=\left(\begin{array}{ccc}a &b \\
c &d
\end{array}\right); N_A^1= \left( \begin{array}{ccc}e &f \\
g &h
\end{array}\right),
\notag \\
&M_A^2=\left(\begin{array}{ccc}a &e \\
c &g
\end{array}\right); N_A^2 =\left( \begin{array}{ccc}b &f \\
d &h
\end{array}\right),
\notag \\
&M_A^3=\left(\begin{array}{ccc}a &e \\
b &f
\end{array}\right);  N_A^3=\left( \begin{array}{ccc}c &g \\
d &h
\end{array}\right).
\notag 
\end{align}
For each pair $(M_A^i, N_A^i)$ we can associate to it a binary quadratic form by taking
$$Q_A^i(u,v) =- \det(M_A^i u - N_A^i v) .$$
Explicitly for $A$ as above,
\begin{align} -Q_A^1(u,v) &= u^2(ad-bc) +uv ( -ah+bg+cf-de) +v^2(eh-fg), \notag \\
-Q_A^2 (u,v)&= u^2(ag-ce)+uv(-ah-bg+cf+de) + v^2(bh-df), \notag \\
-Q_A^3(u,v)&= u^2(af-be) +uv(-ah+bg-cf+de) +v^2(ch-dg).\notag \end{align}

Following Bhargava \cite{bhargava}, we call $A$ projective if the associated binary quadratic forms are all primitive. The action of group $G(\bZ)=\SL_2( \bZ) \times \SL_2( \bZ) \times \SL_2(\bZ)$ on $V(\bZ)$ is defined by taking the $g_i$ in $\left( g_1, g_2, g_3 \right)$ acts on the matrix pair $(M_A^i, N_A^i)$. It is easy to check that the actions of the three components commute with each other, thereby giving an action of the product group. For example, if $g_1 = \left( \begin{matrix} g_{11} & g_{12} \\
g_{21} & g_{22} \end{matrix} \right) $, then it acts on the pair $(M_A^1, N_A^1)$ by 
$$\left( \begin{matrix} g_{11} & g_{12} \\
g_{21} & g_{22} \end{matrix} \right) \cdot \left( \begin{matrix} M_A^1 \\ N_A^1 \end{matrix} \right).$$

The action extends to the complex group $G(\bC)= \GL_2(\bC) \times \GL_2(\bC) \times \GL_2(\bC)$ on the complexified vector space $V(\bC)$. Now we consider the Borel subgroup $B_2( \bC) \subset \GL_2(\bC)$ consisting of the lower-triangular matrices. The action of the subgroup $B_2(\bC) \times B_2(\bC) \times \GL_2(\bC)$ is induced from full group action and has three relative invariants, explicitly for $A \in V(\bC)$, given as follows
\begin{align}
D(A) & =\mathrm{disc}(A)  =(-ah+bg+cf-de)^2 - 4 (ad-bc)(eh-fg) , \notag \\
m(A) & =-\det(M_A^1) =-(ad-bc) ,\notag \\
n(A) &=-\det(M_A^2) =-(ag-ce). \notag 
\end{align}
The corresponding rational character are
\begin{align*}
\chi_1(g) &= \det(b_1)^2 \det(b_2)^2 \det(g_3)^2,\\
\chi_2(g) &= r_1^2 \det(b_2) \det(g_3),\\
\chi_3(g) &= \det(b_1) r_2^2 \det(g_3),
\end{align*}
for $g = (b_1, b_2, g_3) = \left( \left( \begin{matrix}  
r_1 & 0 \\
u_1 & s_1
\end{matrix} \right),  \left( \begin{matrix}  
r_2 & 0 \\
u_2 & s_2
\end{matrix} \right), g_3 \right)$.
\\Furthermore, one can easily show that
\begin{proposition}
The pair $\left(B_2(\bC) \times B_2(\bC) \times \GL_2(\bC) , V(\bC)\right)$ is a prehomogeneous vector space.
\end{proposition}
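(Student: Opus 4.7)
By definition, to show that $(H, V_\bC)$ with $H = B'_2(\bC) \times B'_2(\bC) \times \GL_2(\bC)$ is prehomogeneous, it suffices to produce a Zariski-open $H$-orbit in $V_\bC$. Since $H$ is connected, this reduces to exhibiting a single $v_0 \in V_\bC$ at which the infinitesimal orbit map
\[
d\mu_{v_0} \colon \mathfrak{h} = \mathfrak{b}'_2 \oplus \mathfrak{b}'_2 \oplus \mathfrak{gl}_2 \longrightarrow V_\bC, \qquad (X_1, X_2, X_3) \longmapsto \sum_{i=1}^3 X_i \cdot v_0,
\]
is surjective. Dimensionally $\dim \mathfrak{h} = 3+3+4 = 10$ and $\dim V_\bC = 8$, so the expected generic stabilizer is two-dimensional (reflecting the two-parameter family of scalar triples which act trivially on any cube).

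My plan is to take $v_0$ to be the concrete cube with entries $(a,b,c,d,e,f,g,h) = (1,0,0,1,0,0,1,1)$, for which a direct check gives $m(v_0) = n(v_0) = D(v_0) = 1$, so in particular $v_0 \in V^{ss}_\bC$. The infinitesimal action on the three pairings reads $\delta M_i = (X_i)_{11} M_i + (X_i)_{12} N_i$ and $\delta N_i = (X_i)_{21} M_i + (X_i)_{22} N_i$, with $(X_i)_{12} = 0$ for $i=1,2$ enforced by lower-triangularity. Substituting the entries of $v_0$ produces an explicit $8 \times 10$ matrix representing $d\mu_{v_0}$; four of its columns (the two Borel off-diagonals $\gamma_1, \gamma_2$ together with two appropriate entries of $X_3 \in \mathfrak{gl}_2$) immediately yield four linearly independent tangent directions in $V_\bC$, and the remaining $4 \times 6$ block in the four diagonal Borel generators and the two diagonal entries of $X_3$ is checked to have full rank $4$ by a short row reduction. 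This gives $\rank d\mu_{v_0} = 8 = \dim V_\bC$ and hence openness of $H \cdot v_0$.

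The only genuine difficulty is combinatorial: tracking how the three distinct slicings $F/B$, $L/R$, $U/D$ of the cube couple with the lower-triangular restriction on the first two factors, which breaks the $S_3$-symmetry present in the full $\GL_2(\bC)^3$-action. As a conceptual alternative, one could instead invoke the Sato--Kimura $D_4$ classification referenced in the introduction: $(\GL_2(\bC)^3, V_\bC)$ is already a PVS with open orbit $\Omega = \{D \ne 0\}$ and four-dimensional generic stabilizer in $\GL_2(\bC)^3$, so since $H$ has codimension $2$ in $\GL_2(\bC)^3$, a single transversality check at a convenient $v_0 \in \Omega$ gives $\dim \Stab_H(v_0) = 2$, hence $\dim H \cdot v_0 = 8$ and an open orbit as required.
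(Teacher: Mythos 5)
Your proof is correct, but it follows a genuinely different route from the paper's. The paper argues globally: it takes the hypersurface $\mathcal{H}=\{\mathrm{disc}(A)\det(A^F)\det(A^L)=0\}$, reduces any $A\in V_{\bC}\setminus\mathcal{H}$ to a normal form with $A^F$ diagonal and $A^B$ anti-triangular, and then shows all such normal forms lie in the single orbit of one explicit cube by solving a small linear system in the group parameters; this exhibits the open dense orbit as exactly the complement of $\mathcal{H}$. You instead use the infinitesimal criterion: since $H$ is connected, it suffices that $d\mu_{v_0}\colon\mathfrak{h}\to V_{\bC}$ be surjective at one point. Your choice $v_0=(1,0,0,1,0,0,1,1)$ does work --- writing out the eight coordinate variations one gets $\delta f=w_3$, $\delta c=y_3$, $\delta b=y_2+w_3$, $\delta e=y_1+w_3$, $\delta a=x_1+x_2+x_3$, $\delta d=x_1+z_2+z_3$, $\delta g=z_1+x_2+z_3$, $\delta h=y_1+z_1+y_2+z_2+z_3$, and one checks directly that these ten parameters hit all of $\bC^8$, so the rank is $8$ as you claim (you should actually display this row reduction rather than assert it, but it does go through). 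The trade-off: your argument is shorter, more mechanical, and consistent with the $2$-dimensional generic stabilizer of scalar triples, but it only produces \emph{an} open orbit somewhere in $\{Dmn\neq 0\}$; the paper's computation additionally identifies the open orbit as the full complement of $\mathcal{H}$, which is the stronger statement implicitly relied on later when the semi-stable locus is defined by the nonvanishing of the three relative invariants. Your second, classification-based alternative is also viable but likewise defers the essential content to an unperformed transversality check of $\mathfrak{h}+\mathrm{Lie}(\Stab_{\GL_2(\bC)^3}(v_0))=\mathfrak{gl}_2(\bC)^{\oplus 3}$, so it is not a shortcut so much as the same linear-algebra verification in a different guise.
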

The immediate corollary is that $\left(G(\bC), V(\bC)\right)$ is again a PVS which is called the $D_4$ type studied in \cite{wrightyukie}.  

Let $B_2(\bZ)$ be the Borel subgroup of $B_2(\bC)$ in $\SL_2(\bZ)$ with positive diagonal elements. The Shintani zeta function associated to the prehomogeneous vector space $(B_2(\bC) \times B_2(\bC) \times \GL_2(\bC) , V(\bC))$ is defined to be

\begin{equation*} \tag{3}
Z_{\rm{Shintani}}(s_1, s_2, w) = \sum_{A \in \left( B_2(\bZ) \times B_2(\bZ) \times \SL_2(\bZ) \right) \backslash V^{ss}(\bZ) } \frac{1}{|\mathrm{disc}(A)|^w |\det(M_A^1)|^{s_1} |\det(M_A^2)|^{s_2}} ,\notag 
\end{equation*}
where $V^{ss}(\bZ)$ is the subset of semi-stable points of $V(\bZ)$ consisting of those orbits on which none of the three relative invariants vanishes. Denote by $\mathrm{Stab}(A)$ the stabilizer group in $ B_2(\bZ) \times B_2(\bZ) \times \SL_2(\bZ)$ of the cube $A$. We need to show that the order $|\mathrm{Stab}(A)|$ is finite. 
\begin{proposition}
For any $A \in V^{ss}(\bZ)$, the stabilizer group $\mathrm{Stab}(A)$ in $B_2(\bZ) \times B_2(\bZ) \times \SL_2(\bZ)$ is:
$$\{ (I_2, I_2, I_2)\},$$
where $I_2$ is the $2\times 2$ identity matrix. 
Therefore
$$|\mathrm{Stab}(A)| =1$$
for any   $A \in V^{ss}(\bZ)$.
\end{proposition}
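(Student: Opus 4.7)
My plan is to parameterize any candidate stabilizer element $(g_1, g_2, g_3)$ and then compress the $8$ scalar equations from $(g_1, g_2, g_3) \cdot A = A$ into a rigid $4$-equation system on $g_3 - I$, where a rank/nilpotence observation rules out all nontrivial stabilizers. Because an element of $B'_2(\bZ)$ is lower-triangular in $\SL_2(\bZ)$ with positive diagonal entries, both diagonal entries are forced to be $1$, so I write $g_i = \left(\begin{smallmatrix} 1 & 0 \\ m_i & 1 \end{smallmatrix}\right)$ with $m_i \in \bZ$ for $i = 1, 2$, and let $g_3 = \left(\begin{smallmatrix} \alpha & \beta \\ \gamma & \delta \end{smallmatrix}\right) \in \SL_2(\bZ)$. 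It will be convenient to group the entries of the cube into the four column vectors $v_1 = \binom{a}{c}, \, v_2 = \binom{b}{d}, \, v_3 = \binom{e}{g}, \, v_4 = \binom{f}{h}$ in $\bZ^2$; the semi-stability of $A$ forces $v_1 \neq 0$ together with $\det[v_1 | v_2] = m(A) \neq 0$ and $\det[v_1 | v_3] = n(A) \neq 0$.

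The first step is to expand $(g_1, g_2, g_3) \cdot A = A$ in the $8$ coordinates, using that $g_1$ and $g_2$ merely shift the ``back'' layer by an integer multiple of the ``front'' layer in the first and second directions respectively. After a direct but bookkeeping-heavy calculation the eight scalar identities collapse into the structural system
\begin{align*}
(g_3 - I) v_1 &= 0, & (g_3 - I) v_2 &= -m_2 v_1, \\
(g_3 - I) v_3 &= -m_1 v_1, & (g_3 - I) v_4 &= m_1 m_2 v_1 - m_1 v_2 - m_2 v_3.
\end{align*}

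Next I would show $g_3 = I$. Suppose for contradiction that $g_3 \neq I$; since $v_1 \neq 0$ lies in $\ker(g_3 - I)$ and $g_3 - I$ is a nonzero $2 \times 2$ matrix, it has rank exactly one. The $v_2$-equation then forces $m_2 \neq 0$, for otherwise $v_2 \in \ker(g_3 - I) = \Span(v_1)$, violating $m(A) \neq 0$; symmetrically the $v_3$-equation gives $m_1 \neq 0$ using $n(A) \neq 0$. The same $v_2$-equation identifies $\Span(v_1)$ as the image of $g_3 - I$, so image equals kernel, and consequently $(g_3 - I)^2 = 0$. Applying $g_3 - I$ to the $v_4$-equation and invoking the other three relations annihilates the left-hand side but produces
$$ 0 \,=\, (g_3 - I)^2 v_4 \,=\, m_1 m_2 (g_3 - I) v_1 - m_1 (g_3 - I) v_2 - m_2 (g_3 - I) v_3 \,=\, 2 m_1 m_2 \, v_1, $$
so $m_1 m_2 = 0$, contradicting $m_1, m_2 \neq 0$. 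Hence $g_3 = I$, and the $v_2$- and $v_3$-equations immediately reduce to $m_2 v_1 = 0$ and $m_1 v_1 = 0$, giving $m_1 = m_2 = 0$ and completing the argument.

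The main obstacle in this plan is the bookkeeping in the first step that distills the $8$ coordinate identities into the compact four-equation vector system; once this packaging is in hand, the rank-one/nilpotence observation is what does the real work, and it closes out the proof in essentially one line.
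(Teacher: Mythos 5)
Your proof is correct, and the vector system you write down is exactly what the coordinate expansion produces (I checked: with $g_1, g_2$ unipotent lower-triangular the combined action sends $v_4$ to $g_3(v_4+m_1v_2+m_2v_3+m_1m_2v_1)$, and using $g_3v_1=v_1$, $g_3v_2=v_2-m_2v_1$, $g_3v_3=v_3-m_1v_1$ one lands precisely on your four relations). However, your route is genuinely different from the paper's. The paper first uses the reduction theory it is about to develop: it moves $A$ within its orbit to a representative with $c(A)=0$ and with the middle coefficients $x,y$ of $Q_1,Q_2$ reduced into $[0,2|m|-1]$ and $[0,2|n|-1]$, observes that then $a,d,g\neq 0$, and deduces rather quickly that any stabilizing triple must be of the form $(I_2,I_2,\pm I_2)$, with positivity of the diagonal killing the sign. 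That argument implicitly invokes the conjugacy $\mathrm{Stab}(\gamma A)=\gamma\,\mathrm{Stab}(A)\,\gamma^{-1}$ and leaves the final step ("it becomes obvious") largely to the reader. Your argument needs no normal form and no appeal to the reduction lemma: the observation that $v_1\in\ker(g_3-I)$ while $-m_2v_1\in\mathrm{im}(g_3-I)$ forces $\ker=\mathrm{im}=\Span(v_1)$, hence $(g_3-I)^2=0$, and the $v_4$-relation then yields $2m_1m_2v_1=0$, is a clean self-contained linear-algebra contradiction that uses exactly the three relative invariants ($v_1\neq 0$, $\det[v_1|v_2]=m(A)\neq 0$, $\det[v_1|v_3]=n(A)\neq 0$) in the way one would hope. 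What the paper's approach buys is reuse of machinery it needs anyway; what yours buys is a complete, verifiable proof independent of that machinery. The only thing I would ask you to add in a final write-up is the actual eight-coordinate expansion (or at least the intermediate formulas for the images of $v_2,v_3,v_4$), since that is the one step you currently assert rather than display.
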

\begin{proof}
For a given $2\times 2\times 2$ integer cube $A  \in V^{ss}(\bZ)$, we write 
\begin{align}
Q^1_A(u, v) &= mu^2+ x uv + s v^2 , \notag \\
Q^2_A(u, v) &= n u^2 + y uv + t v^2 \notag
\end{align}
to be first two binary quadratic forms associated to it. Under the action of $ B_2(\bZ) \times B_2(\bZ) \times \SL_2(\bZ)$ we can change $A$ to another cube satisfying 
$$ a(A) =0 \  \mathrm{and} \  0 \leq x \leq 2 |m|-1 \ \mathrm{and} \  0 \leq y \leq 2 |n|-1.$$
For such $A$, note that the entries $c(A), d(A), e(A) \neq 0$. Therefore the stabilizer group in $ B_2(\bZ) \times B_2(\bZ) \times \SL_2(\bZ)$ must have the form
$$
\left( \left( \begin{matrix} 1 &  0 \\
0 & 1 \end{matrix} \right),
\left( \begin{matrix}  1 &  0 \\
0 & 1 \end{matrix} \right),
\left( \begin{matrix}  \pm 1 &  0 \\
0 & \pm 1 \end{matrix} \right) \right).
$$
Now it becomes obvious that the diagonal elements in the last matrix have to be both positive.
\end{proof}

We can rewrite the Shintani zeta function as 
\begin{equation*} \tag{4}
Z_{\rm{Shintani}}(s_1, s_2, w) = \sum_{D \neq 0} \frac{1}{|D|^w} \sum_{m,n >0 } \frac{B(D, m,n)}{ m^{s_1} n^{s_2}} ,\notag 
\end{equation*}
where 
\begin{align}B(D, m, n)  &  = \# \{ A \in   V^{ss}(\bZ) / \sim : \mathrm{disc}(A) =D, |\det(M_A^1)| =m,| \det(M_A^2)| =n \}  \notag \\
& = 4 \cdot \# \{ A \in   V^{ss}(\bZ) / \sim : \mathrm{disc}(A) =D, \det(M_A^1) =m, \det(M_A^2) =n \}. \notag 
\end{align}

\subsection{Reduction theory}
In this section, we will consider the geometry of integer orbits of the PHVS under the parabolic group action. This will help to reduce the sum over semi-stable orbits of $2\times 2\times 2$ integer cubes in the Shintani zeta function $Z_{\mathrm{Shintani}}(s_1, s_2, w)$ to a sum over the tuples of integers $(D, m, n)$ satisfying certain relations. First we need a lemma which establishes the existence of a $2\times 2\times 2 $ integer cube with the required arithmetic invariants. 

\begin{lemma}
Let $D$, $m$ and $n$ be non-zero integers. For each solution $(x, y)$ to the congruence equations 
\begin{align} 
x^2 &\equiv D  \ ( \mathrm{mod} \ 4m)  \ \text{for} \ 0\leq x \leq 2 |m| -1, \notag \\
y^2  &\equiv D \ ( \mathrm{mod} \ 4n\ )  \ \text{for} \ 0\leq y \leq 2 |n| -1, \notag 
\end{align}
there exists a $2\times 2\times2$ integer cube $A$ such that 
\begin{align}
\mathrm{disc}(A) &= D , \notag \\
Q^1_A(u,v) & = mu^2+ x uv+ sv^2, \notag \\
Q^2_A(u,v) &=n u^2+ y uv + t v^2. \notag
\end{align}
Moreover, the required $2 \times 2 \times 2$ integer cube $A$ can be chosen such that in the top side $M_A^3$ of $A$
\begin{align*}
a=0 \ \mathrm{and} \ g.c.d.(b, e, f) =1. 
\end{align*}
\end{lemma}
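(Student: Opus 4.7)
The plan is to construct $A$ explicitly by normalizing $c = 0$ and choosing a parameter $a$ so that $\gcd(d, g, h) = 1$ is automatic. The congruence hypotheses give integers $s := (x^2 - D)/(4m)$ and $t := (y^2 - D)/(4n)$, and imply $x + y$ is even (since $x^2 \equiv y^2 \equiv D \pmod 4$).

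Set $a := \gcd(m, n, (x+y)/2)$, $c := 0$, $d := m/a$, $g := n/a$, $h := -(x+y)/(2a)$, all integers, with $\gcd(d, g, h) = 1$ by the maximality of $a$. With these entries fixed, the leading- and middle-coefficient equations for $Q_1, Q_2$ collapse to the single Diophantine equation $bn - me = a(x-y)/2$ in $b, e$, while the constant-coefficient equations $fg = eh - s$ and $fd = bh - t$ remaining for $f$ are rationally consistent by the discriminant identity $ms - nt = (x^2 - y^2)/4 = -h \cdot a(x-y)/2$.

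Integer solutions $(b_0, e_0)$ to $bn - me = a(x-y)/2$ exist because $\gcd(m/a, n/a) \mid (x-y)/2$; this is verified prime-by-prime from $v_p(x^2 - y^2) \geq v_p(4\gcd(m,n))$ (a consequence of the original congruences $x^2 \equiv D \pmod{4m}$ and $y^2 \equiv D \pmod{4n}$) together with the formula $v_p(a) = \min(v_p(m), v_p(n), v_p((x+y)/2))$. The general solution is $(b_0 + km/g_0, e_0 + kn/g_0)$ for $k \in \bZ$, where $g_0 := \gcd(m, n)$.

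The main obstacle is making $f = (bh - t)/d \in \bZ$. Setting $g_1 := \gcd(d, g) = g_0/a$, division of the identity $m(eh-s) = n(bh-t)$ by $g_0$ combined with $\gcd(d/g_1, g/g_1) = 1$ shows $d/g_1 \mid bh - t$ automatically for every valid $(b, e)$. The key observation is that $\gcd(h, g_1) = 1$: at any prime $p$ with $v_p(g_1) > 0$ one has $v_p(a) = v_p((x+y)/2) < \min(v_p(m), v_p(n))$, forcing $v_p(h) = v_p((x+y)/2) - v_p(a) = 0$ (the analysis at $p = 2$ uses $v_2(h) = v_2(x+y) - v_2(2a) = 0$ similarly). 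As $b$ varies in its residue class $\bmod\ m/g_0 = d/g_1$, the integer $(bh-t)/(d/g_1)$ shifts by $h$, a unit modulo $g_1$; hence a suitable choice of $b$ achieves $d \mid bh - t$, making $f$ integer and completing the construction of the cube with all required properties.
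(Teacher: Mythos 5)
Your construction is correct, and its skeleton is identical to the paper's: you normalize $c=0$ and make exactly the same choices $a=\gcd(m,n,(x+y)/2)$, $d=m/a$, $g=n/a$, $h=-(x+y)/(2a)$, and you establish $\gcd(m/a,n/a)\mid (x-y)/2$ by the same valuation computation the paper extracts from $\tfrac{x-y}{2}\cdot\tfrac{x+y}{2}=ms-nt$. Where you diverge is in producing the remaining entries $b,e,f$. The paper works modulo $h$: using $\gcd(d,g,p)=1$ for each prime $p\mid h$ and CRT, it finds a single integer $f$ with $s+fg\equiv 0$ and $t+fd\equiv 0 \pmod h$, then \emph{defines} $e=(s+fg)/h$ and $b=(t+fd)/h$ and checks $bg-de=(x-y)/2$ (with $h=0$ treated separately via $\gcd(d,g)=1$). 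You instead solve the linear Diophantine equation $bg-de=(x-y)/2$ first, observe that $d/\gcd(d,g)$ automatically divides $bh-t$ from the identity $m(eh-s)=n(bh-t)$, and then shift $b$ by multiples of $m/\gcd(m,n)$ to arrange divisibility by the remaining factor $\gcd(d,g)$, using that $h$ is a unit modulo $\gcd(d,g)$ (which is where $\gcd(d,g,h)=1$ enters for you, just as it enters for the paper modulo $h$). The two finishing arguments are dual uses of the same coprimality, and yours has the mild advantage of absorbing the $h=0$ case automatically (there $\gcd(d,g)=1$ and the congruence is vacuous), whereas the paper must handle it as a separate case; the paper's version is slightly more direct in that $f$ is produced in one stroke rather than located inside a one-parameter family. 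One small point worth making explicit in your write-up: after forcing $f=(bh-t)/d\in\bZ$, the companion identity $fg=eh-s$ follows from $m(eh-s)=n(bh-t)$, which is what guarantees the constant coefficient of $Q_1$ equals $s$ and hence $\mathrm{disc}(A)=x^2-4ms=D$.
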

\begin{proof}
If there are solutions to the congruence equations, we have
$$D  =x^2 - 4 m s  = y^2 - 4 n t$$
for some integers $s$ and $t$. It implies that $D$ is congruent to $0$ or $1 \ (\mathrm{mod} \ 4)$, and the integers $x, y$ have the same parity. Take 
$$ c= |g.c.d.(m, n, \frac{x+y}{2})|,$$
from 
\begin{equation*}\tag{5} \frac{x-y}{2} \cdot \frac{x+y}{2} = ms - nt, \end{equation*}
it follows that 
\begin{equation*}\tag{6} g.c.d.(\frac{m}{c}, \frac{n}{c})\  | \ \frac{x-y}{2}.\end{equation*}
Set
$$b = \frac{m}{c}, \ e = \frac{n}{c}, \ \text{and} \ f=-\frac{x+y}{2 c}, $$
then 
$$g.c.d.(b,e,f) =1,$$
and $(5)$ can be written as
\begin{equation*} \tag{7}\frac{x-y}{2} \cdot (-f) = bs- et.\end{equation*}

We claim that there is an integer $h$, such that 
\begin{align}
s+ eh & \equiv 0 \ (\text{mod}\ f), \notag \\
t + bh & \equiv 0 \ (\text{mod}\ f). \notag
\end{align}
This can be proved as follows: First if $f=0$, then $bs = et$. As in this case $g.c.d.(b,e) =1$, we conclude that there exists such an integer $h$ such that $ s= -eh$ and $t = -bh$. If $f \neq 0$, for any prime divisor $p$ of $f$, we have 
$$p\ | \ bs- et \  \text{and} \ g.c.d.(b,e,f)=1,$$
it follows that there is a unique solution ($\text{mod} \ p$) to the congruences 
\begin{align}
s+ eh & \equiv 0 \ (\text{mod}\ p), \notag \\
t + bh & \equiv 0 \ (\text{mod}\ p). \notag
\end{align}
Using the Chinese remainder theorem, we conclude that there are integers $d$ and $g$ such that
\begin{align}
s = gf - eh, \notag \\
t = df - bh. \notag
\end{align}
It follows that 
$$ bs- et = (bg- de) \cdot f,$$  
combined with $(7)$, we have
$$\frac{x-y}{2}= de- bg.$$
If in the case of $f=0$, from $(6)$, we know that there always exist integers $d$ and $g$ such that the above equation holds. 

Now we define a $2 \times 2\times 2$ integer cube $A$ by 
$$
(M_A^1, M_A^2)= \left(\left( \begin{array}{cc} a & b \notag \\ 0 & d \notag \end{array} \right) , 
 \left( \begin{array}{cc} e & f \notag \\ g & h \notag \end{array} \right) \right). 
$$
Then the two associated binary quadratic forms are
\begin{align}
Q^1_A(u, v)&= bc u^2 -( bg+cf-de)uv -(eh-fg) v^2 = m u^2+ x uv + s v^2, \notag \\
Q^2_A(u,v) &= ce u^2 - ( - bg+cf+ de) uv-(bh- df\ ) v^2 = nu^2+ y uv+ t v^2. \notag 
\end{align}
So $A$ is the cube required. In particular, we have shown that $g.c.d.(b,e,f) =1$. 
\end{proof}

We next want to show that under the assumption that $D$ is square-free, the data $(D, m, n, x, y)$ uniquely determines a $B_2(\bZ) \times B_2(\bZ) \times \SL_2(\bZ)$-orbit.  

\begin{proposition}
Let $D$ be a non-zero square-free integer and $m$, $n$ be non-zero integers. Each solution to the congruence equations
\begin {align}
x^2 &\equiv D  \ ( \mathrm{mod} \ 4m)  \ \text{for} \ 0\leq x \leq 2 |m| -1, \notag \\
y^2  &\equiv D\ ( \mathrm{mod} \ 4n\ )  \ \text{for} \ 0\leq y \leq 2 |n| -1, \notag 
\end{align}
determines uniquely an integral orbit represented by $A$ such that 
\begin{align}
\mathrm{disc}(A) &= D , \notag \\
Q^1_A(u,v) &= mu^2+ x uv+ s v^2, \notag \\
Q^2_A(u,v) &=nu^2 + yuv + t y^2. \notag
\end{align}
\end{proposition}
\begin{proof}
From the last lemma, we know that there always exists such a $2 \times 2 \times 2$ integral matrix $A$ satisfying the congruence equations. We want to show that the integral orbit it represent in the 
PVS $\left(B_2(\bC) \times B_2(\bC) \times \GL_2(\bC), V(\bC)\right)$
is uniquely determined by the data $(d, m, n, x, y)$. 

Denote $A$ by the pair of matrices 
$$A = \left( \left( \begin{matrix}
a & b \\
c & d 
\end{matrix} \right ) ,  \left( \begin{matrix}
e & f \\
g & h 
\end{matrix} \right )  \right). $$
Under the action of the subgroup $1\times 1 \times \SL_2(\bZ)$, $a(A)$ can be made equal to $0$, so we assume it. We then show that the $(c, b, e, f)$ is uniquely determined up to the sign of $c$.
Then we have equations
\begin{align}
-bg-cf + de &= x, \notag \\
bg-cf-de&= y ,\notag \\
bc &= m ,\notag \\
ce &= n. \notag
\end{align}
after adding the first two equations, 
\begin{align}
cf &= -(x+y) /2, \notag\\
bc &= m, \notag \\
ce &= n. \notag
\end{align}
Therefore we have 
$$ c \times g.c.d.(b, e,f) = g.c.d.(m, n, (x+y)/2).$$
As 
$$D = (bg-ah -ed)^2+4bc (eh-fg), $$
and $D$ is square-free, it implies that $g.c.d.(b, e,f) =1$. Therefore $c = |g.c.d.(m,n,(x+y)/2)|$ as we can transform $c$ to be positive. 

We next show that for two such $2\times 2\times 2$ integral matrices with $a =0$ they are equivalent in the sense one is transformed to another by the action of $1 \times 1 \times B_2(\bZ)$.
To prove this statement we again require $D$ to be square-free.  

As both have the same associated binary quadratic forms:
\begin{align}
Q^1_A(u,v) &= m u^2 + x uv + sv^2 \notag  =bc u^2 -(bg+ cf-ed)uv-(eh-fg)v^2 ,\notag \\
Q^2_A(u, v) &= n u^2 + y uv + tv^2 \notag = ce u^2 -(-bg+cf+de) uv-(bh-df) v^2 ,\notag
\end{align}
we can set up the following equations by comparing the coefficients:
\begin{align}
x(A_1) = x(A_2): & bg_1+cf - e d_1= b g_2 +cf - e d_2 , \notag \\
y(A_1) = y(A_2): & ed_1 +cf - bg_1 = e d_2+cf - b g_2, \notag \\
s(A_1) =s(A_2) : & e h_1-f g_1 = e h_2- fg_2 , \notag \\
t(A_1) = t(A_2):  &b h_1- d_1f = b h_2 - d_2f .\notag 
\end{align}
Subtracting the right side from the left side on each equation, we have:
\begin{align}
b \Delta (g) - e \Delta( d) =0, \notag \\
e \Delta (h) -f \Delta (g) = 0, \notag \\
b \Delta (h)- f \Delta (d)  = 0, \notag
\end{align}
where we use notation $\Delta(g)= g_1-g_2$. Then we have solutions to the above equation system:
$$ \Delta(d); \Delta(g) = \Delta(d) e/b;   \Delta(h) = \Delta(d)f /b.$$
Under the assumption of $D$ square-free, we have shown that $g.c.d.(b, e, f)$ $=1$. Therefore $b|\Delta(d)$, i.e., the solution has form 
$$ \Delta(d)=bk; \Delta(g) = e k;   \Delta(h) = f k.$$
This implies that we can make $A_1$ equivalent to $A_2$ by the action of $1 \times  1  \times B_2(\bZ)$. 
\end{proof}

Now we turn to the general case without assuming square-free discriminant. For this, we denote by $B(D, m,n)$ the $4$ times the number of integral orbits $[A]$ in $(B_2(\bC) \times B_2(\bC) \times \GL_2(\bC), V(\bC))$ with 
$\mathrm{disc(A)}=D, Q^1_A(1, 0) =m, Q^1_A(1, 0) =n$. Then

\begin{proposition}
Let $m$ and $n$ be non-zero integers, and $D= D_0 D_1^2$ where $D_0$ is square-free. We have
\begin{equation*} \tag{8} B(D,m,n) = \sum_{d|D_1} b(\frac{D}{d^2}, \frac{m}{d}, \frac{n}{d}),\end{equation*}
where
\begin{align*}
b(\frac{D}{d^2}, \frac{m}{d}, \frac{n}{d})= \begin{cases}  d \cdot A(\frac{D}{d^2}, \frac{4m}{d}) \cdot
A(\frac{D}{d^2}, \frac{4n}{d}) & \mathrm{if} \ d \ \mathrm{ divides} \ g.c.d.(D_1, m,n) , \\
0 &  \mathrm{otherwise}. 
\end{cases}
\end{align*}
\end{proposition}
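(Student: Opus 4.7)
My plan is to extend Propositions 3.5 and 3.7 from a single prime power to a general $D_1$ by stratifying the orbit space $\bigl(B'_2(\bZ) \times B'_2(\bZ) \times \SL_2(\bZ)\bigr) \backslash V^{ss}_{\bZ}(D, m, n)$ according to the invariant $\delta(A) := \gcd(d, g, h)$, computed on a normalized representative with $c = 0$. The strategy is to show that $\delta$ is a well-defined orbit invariant ranging over divisors of $\gcd(D_1, m, n)$, and that the stratum $\delta = d$ contains exactly $d \cdot A(D/d^2, 4m/d) \cdot A(D/d^2, 4n/d)$ orbits; summing then yields formula (8).

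First I would verify the invariant: after using the $\SL_2(\bZ)$-action to set $c = 0$, the identities $a d = m$, $a g = n$, and $\mathrm{disc}(A) = (bg - ah - de)^2 - 4 a d (eh - fg)$, together with the decomposition $D = D_0 D_1^2$ with $D_0$ squarefree, force $\delta \mid D_1$ and $\delta \mid \gcd(m, n)$; clearly $\delta$ is preserved under the remaining $B'_2(\bZ) \times B'_2(\bZ) \times B_2(\bZ)$-action. Next, for each $d \mid \gcd(D_1, m, n)$ I would apply Lemma 3.3 at the reduced parameters $(D/d^2, m/d, n/d)$ to produce cubes $A_d$ with $c(A_d) = 0$ and $\gcd(d(A_d), g(A_d), h(A_d)) = 1$. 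The uniqueness argument from Proposition 3.4 requires only this primitivity condition (not the full squarefreeness of the discriminant), so such primitive orbits correspond bijectively to the solutions of $x^2 \equiv D/d^2 \pmod{4m/d}$ and $y^2 \equiv D/d^2 \pmod{4n/d}$, giving the count $A(D/d^2, 4m/d) \cdot A(D/d^2, 4n/d)$. Lifting by $\bigl(I_2, I_2, \begin{pmatrix} 1 & j \\ 0 & d \end{pmatrix}\bigr)$ for $0 \leq j \leq d - 1$ then produces $d$ distinct orbits at the original level with $\delta = d$, exactly as in the proof of Proposition 3.7.

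The main obstacle is the bijectivity of the construction $(d, A_d, j) \leftrightarrow A$. Injectivity across distinct values of $d$ follows from the orbit invariance of $\delta$; injectivity for fixed $d$ follows because different $j$'s produce cubes distinguished by the residue of $b \pmod d$, which is not absorbed by the leftover $B_2(\bZ)$-action after the scaling has been fixed. For surjectivity, given any orbit with $\delta(A) = d$, I would factor $d$ out of the appropriate entries of the bottom matrix $A^D$ to recover a primitive cube $A_d$ at level $(D/d^2, m/d, n/d)$, with the $d$-fold ambiguity in this factoring accounting for the $j$-parameter. Once this bijection is established, summing $d \cdot A(D/d^2, 4m/d) \cdot A(D/d^2, 4n/d)$ over $d \mid \gcd(D_1, m, n)$ gives (8); the stated convention $b(D/d^2, m/d, n/d) = 0$ when $d \nmid \gcd(D_1, m, n)$ allows the sum to be written over all $d \mid D_1$.
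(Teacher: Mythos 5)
Your proposal is correct and follows essentially the same route as the paper: apply the existence lemma at the reduced parameters $(D/d^2, m/d, n/d)$ to get primitive representatives (with $\gcd(d,g,h)=1$), note that the uniqueness argument of Proposition 3.4 needs only this primitivity rather than squarefreeness of $D$, and lift by the matrices $\bigl(I_2, I_2, \begin{pmatrix} 1 & j \\ 0 & d \end{pmatrix}\bigr)$ for $0 \le j \le d-1$. You are in fact more explicit than the paper about the stratification by $\delta(A)=\gcd(d,g,h)$ and the injectivity/surjectivity of the correspondence $(d, A_d, j) \leftrightarrow A$, which the paper's own proof leaves implicit.
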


\begin{proof}
By the existence lemma, there is a $2 \times 2 \times 2$ integral matrix $A$ satisfying
\begin{align*}
a =0 \ \mathrm{and} \ g.c.d.(b,e,f) =1,
\end{align*} 
and such that 
\begin{align*}
\mathrm{disc}(A) = D, \ Q^1_A(1, 0) =m \ \mathrm{and} \ Q^1_A(1, 0) =n. 
\end{align*}
The condition $g.c.d.(b,e,f) =1$ implies that the matrix $A$ determines a unique integral orbit.  

If $d| g.c.d.(D_1, m,n)$, we write $m'= \frac{m}{d}$, $n'= \frac{n}{d}$. Applying the existence lemma again to the non-zero integers $\frac{D}{d^2}$, $\frac{m}{d}$, and $\frac{n}{d}$, we can 
find an $2 \times 2 \times 2$ integral matrix $A'$ satisfying 
\begin{align*}
a' =0 \ \mathrm{and} \ g.c.d.(b',e',f') =1,
\end{align*} 
and such that 
\begin{align*}
\mathrm{disc}(A') = \frac{D}{d^2}, \ Q^1_A(1, 0) = \frac{m}{d} \ \mathrm{and} \ Q^2_A(1, 0) =\frac{n}{d}. 
\end{align*}
The condition $g.c.d.(b', e', f')=1$ implies that $A'$ determines a unique integral orbit with discriminant $D/d^2$.  

For each $g$ in $\{1 \times 1 \times \left( \begin{array}{cc }1 & 0 \\
j & d\end{array}\right): 0 \leq j \leq d-1\} $, the integral cube $g \cdot A'$ represents an integral orbit with discriminant $D$. 
\end{proof}

As an immediately corollary, we have
\begin{corollary}
If $D$ is a fundamental discriminant, then
$$B(D, m,n)=  A(D, 4m) A(D, 4n).$$
\end{corollary}

Finally we obtain the explicit formula for the Shintani zeta function associated to the PVS of $2\times 2 \times 2$ cubes. 
\begin{theorem}
The Shintani zeta function $Z_{\rm{Shintani}}(s_1, s_2, w)$ can be expressed as  
\begin{align}
& \sum_{A \in \left( B_2(\bZ) \times B_2(\bZ) \times \SL_2(\bZ) \right) \backslash V^{ss}(\bZ) } \frac{1}{|\mathrm{disc}(A)|^w |\det(A^F)|^{s_1} |\det(A^L)|^{s_2}} \notag \\
& =  \sum_{D=D_0D_1^2} \frac{1}{|D|^w} \sum_{m,n > 0} \frac{\sum_{\substack{d|D_1\\ d|m, d|n}}d\cdot A(\frac{D}{d^2}, \frac{4m}{d}) \cdot A(\frac{D}{d^2}, \frac{4n}{d})}{m^{s_1}n^{s_2}}. \notag 
\end{align}
\end{theorem}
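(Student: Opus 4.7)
\medskip
\noindent\emph{Proof strategy.} The plan is to assemble the two ingredients already prepared in this section. First, by the definition (3) together with the grouping by relative invariants recorded in (4), the Shintani zeta function can be rewritten as
\[
Z_{\mathrm{Shintani}}(s_1,s_2,w) \;=\; \sum_{D\neq 0}\frac{1}{|D|^w}\sum_{m,n>0}\frac{B(D,m,n)}{m^{s_1}n^{s_2}},
\]
where $B(D,m,n)$ records four times the number of $B'_2(\bZ)\times B'_2(\bZ)\times \SL_2(\bZ)$-orbits of semi-stable cubes with $\mathrm{disc}(A)=D$, $\det(A^F)=m$ and $\det(A^L)=n$; the factor of four accounts for the four sign choices of $(\det(A^F),\det(A^L))$. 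The theorem will then follow once $B(D,m,n)$ is identified with the claimed inner coefficient.

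Next I would invoke Proposition 3.7. Writing $D=D_0 D_1^2$ with $D_0$ square-free, that proposition gives
\[
B(D,m,n) \;=\; \sum_{d\mid D_1} b\!\left(\tfrac{D}{d^2},\tfrac{m}{d},\tfrac{n}{d}\right),
\]
with the $d$-th term vanishing unless $d\mid g.c.d.(D_1,m,n)$, in which case it equals $d\cdot A(D/d^2,\,4m/d)\cdot A(D/d^2,\,4n/d)$. Unraveling the vanishing condition folds the requirement $d\mid D_1$, $d\mid m$, $d\mid n$ into the summation index, producing literally the sum appearing in the theorem. Substituting back into the rewriting of $Z_{\mathrm{Shintani}}$ completes the argument.

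The substantive work, and hence the real obstacle, has already been absorbed into the buildup to Proposition 3.7: Lemma 3.3 furnishes an integer cube with the prescribed invariants together with the convenient normalization $c=0$, $g.c.d.(d,g,h)=1$; Proposition 3.5 pins down uniqueness of the orbit representative when the discriminant is square-free; and Propositions 3.6 and 3.7 inductively handle the non-square-free case by scaling a cube of smaller discriminant $D/d^2$ through the matrices $\bigl(\begin{smallmatrix}1 & j\\ 0 & d\end{smallmatrix}\bigr)$ in the $\SL_2$-factor and verifying that the resulting $d$ orbits are distinct. Given all this, the present theorem is essentially a substitution, and the only remaining verification is that the index sets in (8) and in the theorem statement coincide --- which is immediate from the definition of $b(\cdot,\cdot,\cdot)$.
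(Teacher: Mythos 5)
Your proposal is correct and follows exactly the paper's route: the theorem is obtained by substituting the formula of Proposition 3.7 for $B(D,m,n)$ into the rewriting (4) of the Shintani zeta function, with all the substantive work residing in Lemma 3.3 and Propositions 3.5--3.7 as you indicate. The paper itself treats the theorem as an immediate summary of those results, so nothing further is needed.
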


In particular, if $D$ is an odd integer, then $(8)$ becomes
\begin{equation*}\tag{9}B(D,m,n) =\sum_{\substack{d|D_1}}d\cdot  A(\frac{D}{d^2}, \frac{4m}{d}) \cdot A(\frac{D}{d^2}, \frac{4n}{d}),\end{equation*}
as when $d$ is an odd integer it is a divisor of $m$ if and only if it is a divisor of $4m$. So we have the following 

\begin{corollary}
The partial Shintani zeta function $Z^{\mathrm{odd}}_{\rm{Shintani}}(s_1, s_2, w)$ defined by
\begin{align*}
 Z^{\mathrm{odd}}_{\rm{Shintani}}(s_1, s_2, w)=\sum_{\substack{ A \in \left( B_2(\bZ) \times B_2(\bZ) \times \SL_2(\bZ) \right) \backslash V^{ss}(\bZ)\\ \mathrm{dics}(A) \ \mathrm{odd} }} \frac{1}{|\mathrm{disc}(A)|^w |\det(A^F)|^{s_1} |\det(A^L)|^{s_2}} 
\end{align*}
can be written as
\begin{align*}
 \sum_{D=D_0D_1^2} \frac{1}{|D|^w} \sum_{m,n > 0} \frac{\sum_{d|D_1}d\cdot  A(\frac{D}{d^2}, \frac{4m}{d}) \cdot A(\frac{D}{d^2}, \frac{4n}{d})}{m^{s_1}n^{s_2}}. 
\end{align*}
\end{corollary}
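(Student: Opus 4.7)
The plan is to derive this corollary as a direct specialization of Theorem 3.9 to the odd-discriminant sector, combined with one arithmetic observation about odd divisors.

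First, I would invoke Theorem 3.9 to write
\begin{align*}
Z_{\rm{Shintani}}(s_1, s_2, w) = \sum_{D = D_0 D_1^2 \neq 0} \frac{1}{|D|^w} \sum_{m, n > 0} \frac{B(D, m, n)}{m^{s_1} n^{s_2}},
\end{align*}
with $B(D, m, n)$ given by formula (8). The partial zeta function $Z^{\mathrm{odd}}_{\rm{Shintani}}$ is obtained from $Z_{\rm{Shintani}}$ by restricting the sum over orbits to those whose discriminant is odd; since the outer sum on the right is indexed by the discriminant, this restriction translates directly into restricting $D$ (equivalently, both the squarefree part $D_0$ and the square factor $D_1$) to be odd, so the Shintani sum equals the corresponding double sum with only odd $D$.

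Next, I would simplify the inner coefficient $B(D, m, n)$ in the odd case. When $D$ is odd, $D_1$ is odd, so every divisor $d \mid D_1$ is coprime to $4$. Hence $d \mid m \iff d \mid 4m$ and $d \mid n \iff d \mid 4n$; in particular, the side conditions $d \mid m$ and $d \mid n$ in (8) coincide with the integrality requirements for the arguments $4m/d$ and $4n/d$ appearing inside the $A(\cdot, \cdot)$ factors. Under the standing convention that $A(k, \ell)$ is evaluated only when $\ell \in \bZ$, these conditions are automatically enforced by the $A$-factors themselves and may be dropped from the explicit summation, yielding formula (9) and hence the corollary.

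There is no essential obstacle here: all the substantive work was carried out in Theorem 3.9 and the supporting reduction-theory propositions 3.4--3.8. The corollary is merely a reindexing of the outer sum together with the remark that divisibility by an odd integer is unchanged by the factor of $4$.
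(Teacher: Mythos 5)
Your proposal is correct and follows essentially the same route as the paper: the paper likewise obtains the corollary by restricting the outer sum of Theorem 3.9 to odd $D$ and noting that, since every divisor $d$ of the odd integer $D_1$ is odd, $d\mid m$ if and only if $d\mid 4m$, so the side conditions in formula $(8)$ may be dropped to give $(9)$. Your additional remark that the integrality of $4m/d$ and $4n/d$ inside the $A(\cdot,\cdot)$ factors enforces these conditions is a slightly more careful statement of the same observation.
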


\section{$A_3$ Weyl Group Multiple Dirichlet Series}
In this section, we will relate the Shintani zeta function $Z_{\mathrm{Shintani}}(s_1, s_2, w) $ to the quadratic $A_3$-Weyl group multiple Dirichlet series. The idea is first to construct a multiple Dirichlet series $Z_{\rm{WMDS}}(s_1, s_2, w) $ and then show its relation to the Shintani zeta function of PVS of $2 \times 2 \times 2$ cubes, using the results we did for the relation between $Z_{A_2}(s, w)$ and $Z_{\mathrm{Shintani}}(s, w; B_2)$. Finally we show the multiple Dirichlet series $Z_{\mathrm{WMDS}}(s_1, s_2, w)$ is the desired $A_3$ Weyl group multiple Dirichlet series by computing the generating function of its $p$-parts. 

In the $A_2$-WMDS 
\begin{align*}
Z_{A_2} (s, w) = \sum_{\substack{m >0, \\  D \ \mathrm{odd}\ \mathrm{discriminant}}} \frac{\chi_D(\hat{m}) a(D, m)}{ m^s |D|^w},
\end{align*} 
we let $\tilde{A}(D, m) = \chi_D(\hat{m}) a(D, m)$. Write $D= D_0 D_1^2$ where $D_0$ stands for the square-free part of $D$.  Define
\begin{align*}
Z_{\mathrm{WMDS}}(s_1,s_2, w) 
= \sum_{ D \ \mathrm{odd} \ \mathrm{discriminant}} \frac{1}{|D|^w} \sum_{m,n>0} \frac{\chi_D(\hat{m}) \chi_D(\hat{n})}{m^{s_1}n^{s_2}}a(D,m,n) ,
\end{align*}
where
\begin{align*}
a(D, m,n) = \sum_{\substack{d|D_1\\ d|m,\ d|n}} d\cdot  a(\frac{D}{d^2}, \frac{m}{d}) \cdot a(\frac{D}{d^2}, \frac{n}{d}). 
\end{align*}

\begin{lemma}
The multiple Dirichlet series $Z_{\mathrm{WMDS}}(s_1, s_2, w)$ can be expressed as 
\begin{align*}
Z_{\mathrm{WMDS}}(s_1,s_2, w) 
&= \sum_{ D\ \mathrm{odd} \ \mathrm{discriminant}} \frac{1}{|D|^w} \sum_{m,n>0} \frac{\chi_D(\hat{m}) \chi_D(\hat{n})}{m^{s_1}n^{s_2}}a(D,m,n) \\
& =   \sum_{\substack{D= D_0 D_1^2 \\ D\ \mathrm{odd} \ \mathrm{discriminant} }} \frac{1}{|D|^w}\frac{\sum_{\substack{d|D_1\\ d|m, d|n}}d \cdot \tilde{A}(\frac{D}{d^2},\frac{m}{d}) \cdot \tilde{A}
(\frac{D}{d^2}, \frac{n}{d})}{m^{s_1}n^{s_2}} . 
\end{align*}
\end{lemma}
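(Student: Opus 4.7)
The plan is to substitute the defining recursion
\[a(D,m,n) = \sum_{\substack{d|D_1\\ d|m,\, d|n}} d\, a(D/d^2, m/d)\, a(D/d^2, n/d)\]
into the LHS and pull the fixed characters $\chi_D(\hat m)\chi_D(\hat n)$ inside the sum, then match with the RHS term by term. After this reduction, the lemma becomes the assertion that for every admissible $d$,
\[\chi_D(\hat{m})\, \chi_D(\hat{n}) \;=\; \chi_{D/d^2}\!\left(\widehat{m/d}\right)\, \chi_{D/d^2}\!\left(\widehat{n/d}\right),\]
where on the right the "hats" are taken with respect to the square-free part of $D/d^2$.

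The structural observation driving the proof is that the square-free part of $D$ is preserved under $D \mapsto D/d^2$ whenever $d \mid D_1$. Indeed, writing $D=D_0 D_1^2$ with $D_0$ square-free and $d | D_1$, one has $D/d^2 = D_0(D_1/d)^2$, whose square-free part is still $D_0$ regardless of whether $D_0$ and $D_1/d$ share primes. Since $\chi_D$ is the Kronecker symbol attached to $\bQ(\sqrt{D})$, it depends only on the square-free part, so $\chi_D = \chi_{D/d^2} = \chi_{D_0}$ as characters, and both $\hat{m}$ and $\widehat{m/d}$ refer to the same notion: the largest divisor coprime to $D_0$.

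To finish, I would decompose $d = \hat{d}\,\check{d}$ according to which primes lie in $D_0$, and similarly $m=\hat{m}\check{m}$, $n=\hat{n}\check{n}$. The divisibility $d \mid m$ forces $\hat{d}\mid\hat{m}$ and $\check{d}\mid\check{m}$, hence $\widehat{m/d} = \hat{m}/\hat{d}$. Multiplicativity of the Kronecker symbol then gives
\[\chi_{D_0}\!\left(\widehat{m/d}\right) = \chi_{D_0}(\hat{m})\,\chi_{D_0}(\hat{d}),\]
using $\chi_{D_0}(\hat{d})^2 = 1$ since $\hat{d}$ is coprime to $D_0$. Applying the same identity for $n$ and multiplying, the two factors of $\chi_{D_0}(\hat{d})$ square to $1$, producing the required equality $\chi_{D_0}(\hat{m})\chi_{D_0}(\hat{n})$.

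I do not anticipate a serious obstacle: this is essentially a bookkeeping identity, and the only subtle point is the case when $d$ shares prime factors with $D_0$, which is precisely why the decomposition $d = \hat{d}\check{d}$ is needed. The proof is term-by-term, and no convergence or rearrangement issues arise because the recursion for $a(D,m,n)$ is a finite sum.
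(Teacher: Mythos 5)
Your proof is correct and follows essentially the same route as the paper: both rest on the observation that $d\mid D_1$ preserves the square-free part $D_0$, hence $\chi_D=\chi_{D/d^2}$, and then on matching the hatted factors. You are in fact slightly more careful than the paper at the one delicate point — each factor $\chi_{D_0}(\widehat{m/d})$ individually differs from $\chi_{D_0}(\hat m)$ by $\chi_{D_0}(\hat d)$, and it is only in the product over $m$ and $n$ that these stray factors square to $1$ (using that $D$ odd makes $\chi_{D_0}(\hat d)=\pm1$) — whereas the paper glosses over this.
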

\begin{proof}
This follows from the fact that the quadratic character $\chi_{D}(\cdot)$ is the same as $\chi_{D/d^2}(\cdot)$ by definition, and the factor $m$ is prime to $D$ if and only if the factor $\frac{m}{d}$ is prime to $\frac{D}{d^2}$. Therefore $\chi_D(\hat{m}) \chi_D(\hat{n})$ is a common factor for fixed integers $D$, $m$ and $n$. 
\end{proof}

As we did in the Proposition 2.5, we will show the relation between the inner sum of $Z^{\mathrm{odd}}_{\mathrm{Shintani}}(s_1, s_2, w)$ and $Z_{\rm{WMDS}}(s_1, s_2, w)$. 
\begin{proposition}
Let $D$ be an odd integer. The inner sum of the Shintani zeta function $Z_{\mathrm{Shintani}}^{\mathrm{odd}} (s_1, s_2, w)$ can be expressed by 
$$\sum_{m, n > 0} \frac{B(D, m, n)}{m^{s_1} n^{s_2}}=4 \frac{\zeta(s_1)}{\zeta(2 s_1)} \frac{\zeta(s_2)}{\zeta(2 s_2)}  \sum_{m,n>0} \frac{\chi_D(\hat{m}) \chi_D(\hat{n})}{m^{s_1}n^{s_2}}a(D,m,n) .$$
\end{proposition}
\begin{proof}
Recall that in the proof of Proposition 2.5, we have shown that 
\begin{align*}
\sum_{m>0} \frac{A(D, 4m)}{m^{s} }  = 2 \frac{\zeta(s)}{\zeta(2 s)}  \sum_{m>0} \frac{\chi_D(\hat{m})a(D,m)}{ m^{s}} . 
\end{align*}
Therefore, 
\begin{align*}
\sum_{m, n> 0} \frac{A(D, 4m) A(D, 4n)}{m^{s_1} n^{s_2}} & = 4  \frac{\zeta(s_1)}{\zeta(2 s_1)} \frac{\zeta(s_2)}{\zeta(2 s_2)}    \\
& \cdot \sum_{m, n>0} \frac{\chi_D(\hat{m})\chi_D(\hat{n})}{ m^{s_1} n^{s_2}}  a(D, m) a(D, n). 
\end{align*}
In particular for any $d^2|D$, replace $D$ by $\frac{D}{d^2}$, $m$ by $\frac{m}{d}$ and $n$ by $\frac{n}{d}$, we have
\begin{align*}
\sum_{m, n> 0} \frac{A(\frac{D}{d^2}, \frac{4m}{d}) A(\frac{D}{d^2}, \frac{4n}{d})}{m^{s_1} n^{s_2}} & = 4 \frac{\zeta(s_1)}{\zeta(2 s_1)}  \frac{\zeta(s_2)}{\zeta(2 s_2)}   \\
& \cdot \sum_{m, n>0} \frac{\chi_D(\hat{m})\chi_D(\hat{n})}{ m^{s_1} n^{s_2}}  a(\frac{D}{d^2}, \frac{m}{d}) a(\frac{D}{d^2}, \frac{n}{d}). 
\end{align*}
Finally, taking the sum over all $d$ with $d^2|D$, we have
\begin{align*}
\sum_{m,n > 0} \sum_{d^2|D} \frac{ d \cdot A(\frac{D}{d^2}, \frac{4m}{d}) \cdot A(\frac{D}{d^2}, \frac{4n}{d})}{m^{s_1}n^{s_2}} &= 4 \frac{\zeta(s_1)}{\zeta(2 s_1)}  \frac{\zeta(s_2)}{\zeta(2 s_2)}  \\
& \cdot \sum_{m,n>0} \frac{\chi_D(\hat{m}) \chi_D(\hat{n})}{m^{s_1}n^{s_2}}a(D,m,n) . 
\end{align*}
\end{proof}

Now we are able to give an explicit relation between two multiple zeta functions $Z_{\mathrm{Shintani}}(s_1, s_2, w)$ and $Z_{\rm{WMDS}}(s_1, s_2, w)$. We further relate the Shintani zeta function $Z_{\mathrm{Shintani}}(s_1, s_2, w)$ to a Weyl group multiple Dirichlet series by showing that $Z_{\rm{WMDS}}(s_1, s_2, w)$ is a quadratic $A_3$-WMDS. 

\begin{theorem}
The Shintani zeta function of $\mathrm{PVS}$ of $2\times 2 \times 2$ cubes can be related to the multiple Dirichlet series $Z_{\mathrm{WMDS}}(s_1, s_2, w)$ by 
\begin{align*}
Z^{\mathrm{odd}}_{\rm{Shintani}}(s_1, s_2, w) = & 4 
\frac{\zeta(s_1)}{\zeta(2 s_1)} \frac{\zeta(s_2)}{\zeta(2 s_2)}   Z_{\rm{WMDS}} (s_1, s_2, w).    
\end{align*}
\end{theorem}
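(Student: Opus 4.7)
The plan is to derive the identity by combining two ingredients already assembled in the paper: Corollary~3.11, which gives the explicit closed form of $Z^{\mathrm{odd}}_{\mathrm{Shintani}}$ in terms of the counting function $B(D,m,n)$, and Proposition~4.2, which converts the inner double Dirichlet series in $m$ and $n$ into the WMDS-style sum weighted by $a(D,m,n)$. Once these are in hand, the theorem is essentially a bookkeeping identity; the one step that needs care is the identification of the effective range of summation in $D$.

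First I would substitute the formula from Corollary~3.11, namely
\begin{equation*}
Z^{\mathrm{odd}}_{\mathrm{Shintani}}(s_1,s_2,w) \;=\; \sum_{\substack{D=D_0 D_1^2 \\ D\ \mathrm{odd}}} \frac{1}{|D|^w}\sum_{m,n>0}\frac{B(D,m,n)}{m^{s_1}n^{s_2}},
\end{equation*}
and then apply Proposition~4.2 to the inner sum for each odd $D$. That yields
\begin{equation*}
Z^{\mathrm{odd}}_{\mathrm{Shintani}}(s_1,s_2,w) \;=\; \zeta(s_1)\zeta(s_2)\sum_{D\ \mathrm{odd}}\frac{\tilde P_2(D,s_1)\,\tilde P_2(D,s_2)}{|D|^w}\sum_{m,n>0}\frac{\chi_D(\hat m)\chi_D(\hat n)}{m^{s_1}n^{s_2}}\,a(D,m,n).
\end{equation*}
The two factors of $\zeta$ pull out of the $D$-sum because Proposition~4.2 produces them uniformly.

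The next step is to exploit the explicit form of $\tilde P_2$ given in Lemma~2.6: for an odd $D$ one has $\tilde P_2(D,s)=2(1-2^{-s})(1+2^{-s})$ when $D\equiv 1\pmod 4$, and $\tilde P_2(D,s)=0$ otherwise. The critical point is that this cut-off is \emph{independent of $D$} once $D\equiv 1\pmod 4$, so the product $\tilde P_2(D,s_1)\tilde P_2(D,s_2)$ is the constant $4(1-2^{-s_1})(1+2^{-s_1})(1-2^{-s_2})(1+2^{-s_2})$ on the support and zero elsewhere. Hence the sum over odd $D$ collapses to a sum over odd discriminants (odd integers $\equiv 1\pmod 4$), and the constant factor comes out of the summation.

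Recognizing the remaining sum as precisely the definition of $Z_{\mathrm{WMDS}}(s_1,s_2,w)$ then completes the proof. I do not anticipate a genuine obstacle: the technical heart of the argument was the reduction theory of Section~3 leading to Corollary~3.11 and the $2$-adic calculation underlying Proposition~4.2 and Lemma~2.6. The only subtle point to verify is that restricting to odd $D$ in $Z^{\mathrm{odd}}_{\mathrm{Shintani}}$ matches, after the vanishing of $\tilde P_2$ off of $D\equiv 1\pmod 4$, the range of odd discriminants used in defining $Z_{\mathrm{WMDS}}$; but this is immediate from Lemma~2.6.
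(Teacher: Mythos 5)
Your proposal is correct and follows essentially the same route as the paper: the paper states this theorem immediately after Proposition 4.2 and the recalled formula for $\tilde P_2(D,s)$, leaving exactly the bookkeeping you describe (sum Proposition 4.2 over odd $D$, observe that $\tilde P_2(D,s_1)\tilde P_2(D,s_2)$ is the constant $4(1-2^{-s_1})(1+2^{-s_1})(1-2^{-s_2})(1+2^{-s_2})$ on $D\equiv 1\pmod 4$ and vanishes otherwise, and identify the remaining series with $Z_{\mathrm{WMDS}}$). Your attention to the matching of the summation range with the odd discriminants $D\equiv 1\pmod 4$ is precisely the one point worth making explicit.
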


\begin{theorem}
$Z_{\rm{WMDS}}(s_1,s_2,w)$ is a quadratic $A3$ Weyl group multiple Dirichlet series.
\end{theorem}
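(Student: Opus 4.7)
The plan is to verify that $Z_{\rm{WMDS}}(s_1,s_2,w)$ matches the quadratic $A_3$-Weyl group multiple Dirichlet series of \cite{BBCFH} at the level of local $p$-parts. Recall that such a series is characterized by its $p$-parts, which are rational functions in $p^{-s_1}, p^{-s_2}, p^{-w}$ determined (up to normalization) by a prescribed denominator shape built from the positive roots of $A_3$ together with invariance under the Chinta--Gunnells twisted action of the $A_3$-Weyl group at each simple reflection.

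First, I would compute the $p$-part of $Z_{\rm{WMDS}}$ directly from its definition. Fix an odd prime $p$; since $D$ ranges over odd discriminants, the $p$-contribution is a sum over $k\ge 0$ with local component $D=p^k$. Writing $m=p^i$, $n=p^j$, the definition of $a(D,m,n)$ expands as
\begin{align*}
a(p^k, p^i, p^j) \;=\; \sum_{\ell=0}^{\min(\lfloor k/2\rfloor,\, i,\, j)} p^\ell\, a(p^{k-2\ell}, p^{i-\ell})\, a(p^{k-2\ell}, p^{j-\ell}),
\end{align*}
and applying the closed formula $a(p^r,p^s)=\min(p^{r/2},p^{s/2})$ when $\min(r,s)$ is even (zero otherwise) turns the $p$-part into a concrete geometric series in $p^{-s_1},p^{-s_2},p^{-w}$ weighted by the characters $\chi_{p^k}(\hat p^i)\chi_{p^k}(\hat p^j)$. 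Summing this series produces an explicit rational function whose denominator matches the one dictated by the six positive roots of $A_3$.

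Next, I would invoke the BBCFH recipe that constructs the $A_3$ $p$-part as the residue of the convolution of two $A_2$ $p$-parts. The $A_3$ Dynkin diagram decomposes as two $A_2$ sub-diagrams glued along the middle node; by the averaging construction, the unique rational function with the prescribed denominator and twisted $A_3$-Weyl invariance can be obtained as the residue, along the middle-node variable, of the product of two $A_2$ rational functions already recorded in \S 2.3. I would compute this residue explicitly and check that it agrees with the rational function produced by the direct calculation, thereby identifying our $p$-part with the canonical BBCFH $p$-part. The case $p=2$ is essentially trivial: since $D$ is odd, the local component at $2$ forces $k=0$, and $a(1,p^i,p^j)$ collapses to $a(1,p^i)\,a(1,p^j)=1$, so the $2$-part reduces to the product of two local zeta factors, matching the degenerate $A_3$ $p$-part with trivial discriminant variable.

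The main obstacle will be the combinatorial bookkeeping in the explicit summation. The divisibility constraint $d\mid D_1,\,d\mid m,\,d\mid n$ forces $\ell\le\min(\lfloor k/2\rfloor,i,j)$; the parity requirement that $\min(k-2\ell,i-\ell)$ be even (so that $a(p^{k-2\ell},p^{i-\ell})\ne 0$); and the character twists $\chi_{p^k}(\hat p^i)$ jointly split the sum into cases indexed by the residues of $k,i,j$ modulo $2$. Each case produces a separate rational contribution, and the crucial step is showing that the total assembles into the prescribed residue of a convolution. Equivalently, one must verify invariance under all three simple reflections of $A_3$; the reflection attached to the middle (discriminant) node mixes $s_1,s_2,w$ in a way that entangles the two character factors, and confirming that this entanglement precisely reproduces the Chinta--Gunnells cocycle is the heart of the argument.
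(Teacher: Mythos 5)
Your proposal follows essentially the same route as the paper: both reduce the question to the $p$-parts, expand $a(p^k,p^i,p^j)=\sum_{\ell} p^{\ell}\,a(p^{k-2\ell},p^{i-\ell})\,a(p^{k-2\ell},p^{j-\ell})$, and identify the resulting generating function with the known $A_3$ local factor realized as the residue $\frac{1}{1-pxy^2z}\int f_{A_2}(x,t)f_{A_2}(yt^{-1},z)\frac{dt}{t}$ of a convolution of two $A_2$ $p$-parts. The only difference is that the paper simply cites the explicit rational function $f_{A_3}$ and this convolution identity from Chinta--Gunnells, so the direct verification of twisted Weyl-invariance and the case analysis you anticipate as ``the heart of the argument'' are not carried out there; the coefficient comparison $a_{klt}=b_{klt}$ falls out of the power-series substitution alone.
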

\begin{proof} Consider the $p$-parts of our $a(D,m,n)$ defined by
$$ a_{klt}(p)=a(p^k, p^l, p^t).$$
Explicit from its definition,
$$ a_{klt}(p) = a(p^k, p^l) a(p^k,p^t) + p a(p^{k-2},p^{l-1}) a(p^{k-2}, p^{t-1})+ \cdots .$$
In \cite{chintagunnells} and \cite{chintagunnellsjams}, the authors developed a systematical way to construct the Weyl group multiple Dirichlet series. The idea is to construct a rational function invariant under the Weyl group action. In the case of root system of $A_3$ type, the $p$-parts of the rational function is given by 
$$f_{A_3}(x,y,z)= \frac{(1 - xy - yz + xyz + pxy^2z - px^2y^2z - pxy^2z^2 + 
   px^2 y^3 z^2)}{(1 - x)(1 - y)(1 - z)(1 - py^2z^2) (1 - 
     py^2x^2) (1 - p^2x^2y^2z^2)}.$$
Write the expansion
$$f_{A_3}(x,y,z)= \sum b_{klt}(p) x^l y^k z^t,$$
then we can compare our $\{a_{klt}\}$ with $\{b_{klt}\}$. They coincide with each other as follows: we know that for $|x|, |y|, |z| < 1/p$ there is
$$f_{A_3}(x,y,z) = \frac{1}{1-pxy^2z} \int f_{A_2}(x, t)f_{A_2}(yt^{-1},z)\frac{dt}{t},$$
where 
$$f_{A_2}(x_1,x_2) = \sum_{k,l \geq 0} a(p^k,p^l) x_1^k x_2^l$$
and the integral is taken over the circle $|t| =1/p$ (\cite[Example 3.7]{chintagunnells}). Substituting the above expansion of $f_{A_2}$ into $f_{A_3}$, note that $a(p^k, p^l) = a(p^l, p^k)$, we have
\begin{align}
f_{A_3}(x,y,z) & = \frac{1}{1-pxy^2 z} \sum_{k,l,t \geq 0} a(p^k,p^l) a(p^k,p^t) x^l y^k z^t  \notag \\
 &= \sum_{s=0}^{\infty} p^s x^s y^{2s} z^s  \sum_{k,l,t \geq 0} a(p^k,p^l)a(p^k,p^t) x^ly^kz^t \notag \\
& =\sum_{k,l,t \geq 0} ( a(p^k, p^l) a(p^k,p^t) + p a(p^{k-2},p^{l-1}) a(p^{k-2}, p^{t-1})+ \cdots ) x^ly^kz^t \notag \\
& = \sum_{k.l,t \geq 0} a(p^k,p^l, p^t) x^l y^k z^t. \notag
\end{align}
Therefore, $a_{klt} = b_{klt}$. 
\end{proof}

\section{Moduli parameterizes ideals of a quadratic ring}
We first recall the classical results in the theory of binary quadratic forms \cite{cox}. Given a primitive integral binary quadratic form
$$Q(x, y) = ax^2 + bx y +  cy^2$$
with discriminant $D= b^2 -4 ac$ such that $K =\bQ ( \sqrt{D})$ a quadratic field, there is a canonical way to associate it an integral ideal $I$ in the quadratic order $R=R(D)$. 
Let $\tau $ be one of the two roots of the quadratic function $Q(x, 1) =0$, then  
\begin{align}
R = \langle 1, a \tau \rangle \notag \ \mathrm{and} \ 
I = \langle a, a \tau \rangle .\notag 
\end{align}
If $f$ is the conductor of the quadratic order $R$, then
we can express $a\tau$ as:
$$a \tau = \frac{-b \mp f d_K }{2} \pm f w_K, $$
where $ w_ K = \frac{d_K+ \sqrt{d_K}} {2}$, and $d_K$ is the fundamental discriminant of the quadratic field $K$. It follows that $R$ has a $\bZ$-basis $[1, f w_K]$, which only depends on the discriminant $D=f^2 d_K$ of $R$. Notice that $I$ is the ideal of $R$ satisfying $R/I \cong \mathrm{N}(I) \bZ$, where the norm $\mathrm{N}(I) =|a|$. 

In order to extend the above construction to an arbitrary integral binary quadratic form
\begin{equation*} \tag{10}Q(x, y) = ax^2 + bx y +  cy^2 \end{equation*}
with discriminant $D= b^2 - 4 ac \neq 0$, we need to first recall the definition of oriented quadratic ring introduced in the paper \cite{bhargava}. A quadratic ring is the commutative ring with unity whose underlying additive group is $\bZ^2$. There is a unique automorphism for a quadratic ring $R$. With the automorphism, we can define the trace of an element $x \in R$ by taking $\mathrm{Tr}(x) = x+ x'$, where $x'$ denotes the image of $x$ under the automorphism. Alternatively, the trace function $\mathrm{Tr} : R \to \bZ$ is defined as the trace of the endomorphism $R \xrightarrow{ \times \alpha} R$. We also define the norm of an element $x \in R$ by taking $\mathrm{N}(x) = x \cdot x'$. The discriminant $\mathrm{disc}(R)$ of $R$ is defined to be the determinant $\det( \mathrm{Tr}(\alpha_i \alpha_j))$ where $\{ \alpha_i\}$ is any $\bZ$-basis of $R$. As the $\bZ$-basis of any quadratic $R$ has the form $[1, \tau]$, where $\tau$ satisfies the equation $\tau^2 + r \tau + s = 0$, the discriminant of $R$ is given explicitly by $\mathrm{disc}(R) = r^2 - 4 s$. Conversely, given any integer $D  \equiv 0$ or $1 \  ( \rm{mod} \ 4)$, there exists a unique quadratic ring $R(D)$ with discriminant $D$. Canonically, $R(D)$ has a $\bZ$-basis $[1, \tau_D]$, where $\tau_D$ is determined by 
\begin{equation*}\tag{11}\tau_D^2 = \frac{D} {4} \ \mathrm{or} \ \tau_D^2 = \frac{D-1}{4} +\tau_D,\end{equation*}
in accordance to whether $D  \equiv 0 \ ( \rm{mod} \ 4)$ or $D  \equiv 1 \ ( \rm{mod} \ 4)$. We call $R$ non-degenerate if $\mathrm{disc}(R) \neq 0$. From now on, we only consider the case of non-degenerate quadratic rings. 

For an integer $D \neq 0$, the quadratic ring $R(D)$ has a unique non-trivial automorphism. The quadratic ring $R(D)$ is oriented if we specify the choice of $\tau_{D}$. For an oriented quadratic ring $R(D)$, the specific choice of $\tau$ in any $\bZ$-basis $[1, \tau]$ is made such that the change-of-basis matrix from the basis $[1, \tau]$ to the canonical basis $[1, \tau_D]$ has positive determinant. We call such a basis $[1, \tau]$ positively oriented.  For the rest of the section, we always assume that the quadratic ring $R(D)$ is oriented with each $\bZ$-basis $[1, \tau]$ positively oriented. 

Finally, for a quadratic ring $R$ with non-zero discriminant $D$, we define for it the narrow class group $\mathrm{Cl}^{+}(R)$, the group of oriented ideal classes. Recall that an oriented ideal is the pair $(I, \epsilon)$, where $I$ is a (fractional) ideal of $R$ in $K(R) = R \otimes \bQ$, and $ \epsilon = \pm 1$ gives the orientation of the ideal $I$. For an element $ k\in K(R)$, the product $ k \cdot (I, \epsilon)$ is defined to be the oriented ideal $(k I, \mathrm{sgn}\left(\mathrm{N}(k)\right) \epsilon)$. Two oriented ideal $(I_1, \epsilon_1)$ and $(I_2, \epsilon_2)$ belong to the same oriented ideal class if they satisfy $(I_1, \epsilon_1) = k \cdot(I_2, \epsilon_2)$. We will suppress $\epsilon$ for the rest of the section and assume $I$ always oriented. For an oriented ideal $I \subset R$, the unoriented norm of $I$ is defined to be $\mathrm{N}(I) = |R /I|$; while the oriented norm of $I$ is denoted by $\epsilon\cdot  \mathrm{N}(I)$.

Now for the binary quadratic form $(10)$, the oriented quadratic ring $R$ is defined to be 
$$R = \langle 1, \tau\rangle ,$$
where the choice of $\tau$ is specified and it satisfies 
$$ \tau^2 + b \tau + ac =0.$$
Further, the oriented ideal $I$ is defined to be
$$I = \langle a, \tau \rangle$$
with the orientation given by the ordered basis $[a, \tau]$. 
It is easy to see that $I$ is the ideal contained in $R$ with the norm $\mathrm{N}(I) = |a|$. If the binary quadratic form is primitive, i.e., $g.c.d.(a,b,c) =1$, then the ideal $I$ is proper, which means it has an inverse in the quadratic algebra $K(R)$.

Let $B_2(\mathbb{Z}) \subset \SL_2(\bZ)$ be the subgroup of lower-triangular integer matrices with positive diagonal elements. Then we will show the following result which says the set of pairs $(R, I)$ with oriented ideal $I$ with cyclic quotient in $R$ can be parameterized by the integer orbits of the PVS of binary quadratic forms acted on by the Borel subgroup $B_2(\bC)$. 

\begin{proposition}
The natural map 
\begin{align*}
 &B_2(\bZ)  \backslash  \{Q(u, v) = au^2 + b uv + cv^2: b^2 -4ac \neq 0, a \neq 0 \}  \\
& \to    \mathrm{Iso}  \backslash \{(R, I): R/I \cong \mathrm{N}(I) \bZ \}
\end{align*}
defined above is a bijection. The isomorphism $f$ from the pair $(R_1, I_1)$ to another $(R_2, I_2)$ is defined to be the orientation-preserving isomorphism from $R_1$ to $R_2$ and sending $I_1$ to $I_2$. 
\end{proposition}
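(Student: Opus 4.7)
The plan is to prove bijectivity in three steps: verify that the map descends to $B_2'(\bZ)$-orbits, establish surjectivity by constructing a form from any admissible pair, and then deduce injectivity from the ambiguity in that construction.

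First, note that $B_2'(\bZ)$ consists precisely of the matrices $\bigl(\begin{smallmatrix}1 & 0\\ m & 1\end{smallmatrix}\bigr)$ for $m \in \bZ$, and as already recorded in the proof of Lemma 2.2, such an element sends the form $(a,b,c)$ to $(a, b+2am, am^{2}+bm+c)$. To see the map is well-defined, I would check the algebraic identity that if $\tau$ satisfies $\tau^{2} + b\tau + ac = 0$, then $\tau' := \tau - am$ satisfies $(\tau')^{2} + (b+2am)\tau' + a(am^{2}+bm+c) = 0$. Consequently $\langle 1, \tau\rangle = \langle 1, \tau'\rangle$ and $\langle a,\tau\rangle = \langle a,\tau'\rangle$ as oriented lattices, so the attached pair $(R,I)$ is literally unchanged; in particular the isomorphism class is preserved.

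For surjectivity, start with a pair $(R,I)$ with $R/I \cong N(I)\bZ$ cyclic of order $a := N(I)$, and fix the positively oriented basis $[1,\tau_R]$ of $R$, where $\tau_R^{2} + b_R \tau_R + c_R = 0$. Put $I$ in Hermite normal form with respect to this basis, writing $I = \langle \alpha,\, \beta + \gamma \tau_R\rangle$ with $\alpha,\gamma > 0$ and $0\le \beta<\alpha$. The cyclicity of $R/I$ of order $a$ is equivalent to $\gcd(\alpha,\beta,\gamma)=1$ and $\alpha\gamma = a$, while the ideal condition $\tau_R \cdot I \subset I$ — applied to both generators — forces $\gamma\mid\alpha$ and $\gamma\mid\beta$. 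Combined with the $\gcd$ condition this gives $\gamma = 1$ and $\alpha = a$, so $I = \langle a,\tau\rangle$ with $\tau = \beta + \tau_R$. Expanding $\tau^{2}$ using $\tau_R^{2} = -b_R\tau_R - c_R$ yields an identity $\tau^{2} + b\tau + ac = 0$ with $b = b_R - 2\beta \in \bZ$ and $c = (\beta^{2} - b_R\beta + c_R)/a$, where integrality of $c$ is exactly the remaining consequence of the ideal condition. The form $Q(u,v) = au^{2} + buv + cv^{2}$ then maps to the given $(R,I)$ — and, via the change-of-basis determinant $\det\bigl(\begin{smallmatrix}a & \beta\\ 0 & 1\end{smallmatrix}\bigr) = a$, the orientation on $I$ picked out by the ordered basis $[a,\tau]$ matches the sign of $a$, which is the datum carried by $Q$.

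For injectivity, suppose $Q_1$ and $Q_2$ determine isomorphic oriented pairs via $f\colon (R_1,I_1)\xrightarrow{\sim}(R_2,I_2)$. Since $f$ is orientation-preserving on $R$ it sends $[1,\tau_{R_1}]$ to $[1,\tau_{R_2}]$, and since it sends $I_1$ to $I_2$ as oriented ideals, the two forms have a common leading coefficient $a = N(I)$ and common discriminant $\mathrm{disc}(R)$. Running the surjectivity analysis, each $Q_i$ is recovered from the common pair by a choice of representative $\beta_i \in \bZ/a\bZ$ with $\tau_i = \beta_i + \tau_R \in I$; the reconstruction argument shows that these are the \emph{only} possible choices. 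Since $\beta_1 \equiv \beta_2 \pmod a$ when interpreted in $\bZ/a\bZ$, lifting them as integers gives $\beta_1 - \beta_2 = am$ for some $m \in \bZ$, and by the Step 1 identity this is precisely the action of $\bigl(\begin{smallmatrix}1 & 0\\ m & 1\end{smallmatrix}\bigr) \in B_2'(\bZ)$ sending $Q_2$ to $Q_1$.

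The main obstacle will be the bookkeeping around orientations — verifying that the orientation inherited by $I$ from the ordered basis $[a,\tau]$ agrees with Bhargava's convention and is transported correctly by ring isomorphisms, so that the sign $\mathrm{sgn}(a)$ of the leading coefficient is neither gained nor lost during the correspondence. Everything else reduces to Hermite normal form, the ideal-closure constraint, and the explicit identity verifying $B_2'(\bZ)$-equivariance.
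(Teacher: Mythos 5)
Your argument is correct, and the surjectivity step takes a genuinely different route from the paper's. The paper first forms the norm form $(\alpha\alpha')u^2-(\alpha'\beta+\alpha\beta')uv+(\beta\beta')v^2$ attached to an arbitrary basis $[\alpha,\beta]$ of $I$ (which requires a separate argument that one may assume $\mathrm{N}(\alpha)\neq 0$ when $R$ is not a domain), divides out the common factor $\mathrm{N}(I)$, and then invokes the elementary divisor theorem to bring the change-of-basis matrix to the shape $\bigl(\begin{smallmatrix} a & \ast \\ 0 & 1\end{smallmatrix}\bigr)$. You instead put $I$ in Hermite normal form relative to $[1,\tau_R]$ and use the ideal-closure condition $\tau_R\cdot I\subset I$ together with cyclicity of $R/I$ to force $I=\langle a,\beta+\tau_R\rangle$ directly, with the same closure condition applied to the second generator yielding the integrality of $c$. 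This is more elementary and sidesteps the $\mathrm{N}(\alpha)\neq 0$ issue entirely, at the cost of carrying out the closure computation by hand. Your well-definedness check (which the paper omits) and your injectivity argument ($f(\tau_1)=\tau_2+s$ with $s\in a\bZ$, hence a $B_2'(\bZ)$-translate) are essentially the paper's. The one loose end you correctly flag is the orientation bookkeeping: your normal form produces $\alpha=\mathrm{N}(I)>0$, so to hit oriented ideals with $\epsilon=-1$ you must take $a=\epsilon\,\mathrm{N}(I)$ and flip the sign of $c$ accordingly (the discriminant and the integrality of $c$ survive this); spelling that out would complete the proof.
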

\begin{proof}
From the construction above, the map is well defined. We first prove the surjectivity.  Given an oriented quadratic ring $R$ with $\mathrm{disc}(R) = D$ and an oriented ideal $I \subset R$ defined by:
$$R = \langle 1, \tau_D \rangle \ \mathrm{and} \ I = \langle \alpha, \beta \rangle,$$
where $\tau_D$ is defined in $(11)$ and the orientation of $I$ is determined by the ordered basis $[\alpha, \beta]$, we can always assume that the norm $\mathrm{N}(\alpha) = \alpha \cdot \alpha' \neq 0$. This is trivial in the number field case. To prove it in the general case, we define a binary quadratic form by 
\begin{equation*}\tag{12} (\alpha \cdot \alpha')  u^2 - (\alpha' \cdot \beta + \alpha \cdot \beta') uv +( \beta \cdot \beta') v^2 , \end{equation*}
then it is easy to see that the discriminant of $(12)$ is exactly the discriminant of $I$ given by
$$\mathrm{disc}(I) = \left( \det \left( \begin{matrix} \alpha & \beta \\
\alpha' & \beta' \end{matrix} \right) \right)^2.$$
As 
$$\mathrm{disc}(I) = (\mathrm{N}(I))^2 \cdot \mathrm{disc}(R),$$
and $R$ is non-degenerate, it implies that $\mathrm{disc}(I) \neq 0$. So at least one of the coefficients of $u^2$ and $v^2$ is non-zero. We can assume $\alpha \cdot \alpha' \neq 0$ by changing the order of $\alpha$ and $\beta$. As $\alpha, \beta \in I$, $\mathrm{N}(I) | \mathrm{N}(\alpha)$ and $\mathrm{N}(I) | \mathrm{N}(\beta)$, it follows that $\mathrm{N}(I)$ is the common factor of all coefficients of $(12)$. After canceling this common factor, we write it as
\begin{equation*} \tag{13} mu^2 + n uv + l v^2\end{equation*}
with $D = n^2- 4 m l $. So the quadratic ring $R(D)$ can be also written as 
$$R(D)  = [1, \tau_1],$$
where the choice of $\tau_1$ is specified and it satisfies
\begin{equation*}\tag{14} \tau_1^2 + n \tau_1 + m l =0. \end{equation*}
We write 
$$ ( \alpha, \beta ) =(1, \tau_1) \left( \begin{array}{cc} p &r \notag \\ q &s\end{array} \right).$$
Substituting $u= \beta $ and $v= \alpha$ into $(13)$, it becomes zero, by comparing it with the defining equation $(14)$ of $\tau_1$, then 
\begin{align*} p= ms + nq \ \mathrm{and}  \  r = - lq;  \quad \mathrm{or} \quad p= -ms \ \mathrm{and}  \  r = ns + lq.  \end{align*}
As $I$ is an ideal with cyclic quotient,  we must have $g.c.d.(q, s) =1$. Then by  the elementary divisor theorem, we can transform the matrix $\left( \begin{array}{cc} p &r \notag \\ q &s\end{array} \right)$ by a left multiplication in $B_2(\bZ)$ and a right multiplication in $\SL_2(\bZ)$ to the matrix has the form $\left( \begin{array}{cc} a & \ast \notag \\ 0 & 1 \notag \end{array} \right)$ with $a =\pm  \mathrm{N}(I)$. Therefore under a certain basis, $R$ and $I$ can be written as 
\begin{equation*}\tag{15}R(D) = \langle 1, \tau_2 \rangle  \ \mathrm{and} \ I =\langle a, \tau_2 \rangle,\end{equation*}
where the choice of $\tau_2$ is made such that the basis $[1, \tau_2]$ is positively oriented. With $\alpha$ replaced by $a$ and $\beta$ replaced by $\tau_2$, by $(12)$ there is a binary quadratic form. From the discussion above, all coefficients of it are divisible by $|a| = \mathrm{N}(I)$. After canceling this common factor, we have
\begin{equation*}  a u^2+ b u v  + c v^2,\end{equation*}
with $D = b^2 - 4 ac$. This is the required binary quadratic form. 

To prove the injectivity of the map, suppose that two binary quadratic forms 
$$Q_i(u,v) = a_i u^2 + b_i u v+ c_i v^2,$$ 
where $a_1=a_2 =a \neq 0$ and $D = b_i^2 - 4 a_i c_i \neq 0$ for $i =1, 2$, have the same image. We want to prove that $b_2 =b_1+ 2n a$ and $c_2= n^2a+ b_1n +c_1$ for some integer $n$. From the definition of the map, the oriented quadratic ring and the oriented ideal can be written as 
\begin{align}R_1 & = \langle 1, \tau_1\rangle , I_1= \langle a, \tau_1 \rangle  ,\notag \\
 R_2 &= \langle 1, \tau_2\rangle , I_2=\langle a, \tau_2 \rangle\notag \end{align} 
respectively, where the choice of $\tau_i's$ are made such that both bases $[1, \tau_1]$ and $[1, \tau_2]$ are positively oriented, and they satisfy
 $$\tau_i^2 + b_i \tau_i + a c_i =0.$$ 
As they have the same image, there exists an isomorphism $f$ from $R_1$ to $R_2$ preserving the orientation, so it has the form:
$$f(\tau_1)  =  \tau_2 +s ,$$
As it also satisfies $f(I_1 ) = I_2= \bZ a + \bZ \tau_2$, so we have
\begin{align}
s&= na, \notag \\
b_2 &= b_1+ 2 na, \notag \\
c_2 &= n^2a + b_1n + c_1, \notag
\end{align}
for some integer $n$. 
\end{proof}

Now we return to the case of $2 \times 2\times 2$ integer cubes. Giver a $2\times 2\times 2$ integer cube $A$, suppose that the $D = \mathrm{disc}(A) \neq 0$, we consider the two binary quadratic forms $Q^1_A(u,v)$ and $Q^2_A(u,v)$ associated to $A$. Suppose that the coefficients $a_i$ of $u^2$ are not zero, then applying the map in the last proposition to each $Q^i_A(u,v)$, we get the pairs $(R; I_1, I_2)$ where $I_i$ is the oriented ideal with $R / I_i \cong |a_i| \bZ$. Explicitly, the map is given by 
\begin{align*}\tag{16}
& \{A : Q^i_A(u,v) = a_i u ^2 + b_i uv + c_i v^2 , i=1,2 \} \notag \\& \rightarrow \{(R; I_1, I_2): I_i = \langle a_i, \tau_i \rangle, \tau_i ^2 + b_i \tau + a_i c_i =0  \}. \notag
\end{align*}

\begin{theorem}
With the notation above. Then the natural map of $(16)$ defines a surjective and finite morphism
$$\left( B_2(\bZ) \times B_2(\bZ) \times \SL_2(\bZ) \right) \backslash V^{ss}(\bZ) \to \mathrm{Iso} \backslash \{(R;I_1, I_2): R/I_1 \cong \mathrm{N}(I_1)\bZ, R/I_2 \cong \mathrm{N}(I_2)\bZ \}.$$
The cardinality $ n( R; I_1, I_2)$ of the fiber is equal to
$$\sigma_1( D_1, a_1, a_2),$$
where $D =D_0 D_1^2= \mathrm{disc}(R)$, and $D_0$ is square-free.
And it satisfies
$$\sum _{\substack{(R; I_1, I_2)/\sim \\ \mathrm{N}(I_i)=|a_i|}} n (R; I_1, I_2)  = B(D, |a_1|, |a_2|).$$
\end{theorem}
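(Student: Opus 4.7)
My plan is to combine the bijection of Proposition 5.1, which handles the single-quadratic-form case, with Lemma 3.3 and the orbit classification implicit in Proposition 3.8. Given a semi-stable cube $A$, the two binary quadratic forms $Q_1(A)$ and $Q_2(A)$ share discriminant $D = \mathrm{disc}(A)$ and have non-vanishing leading coefficients $a_1, a_2$; applying Proposition 5.1 to each produces the triple $(R(D); I_1, I_2)$ appearing in $(16)$. I would check well-definedness on orbits by observing that the first (resp.\ second) factor of $B_2'(\bZ) \times B_2'(\bZ) \times \SL_2(\bZ)$ acts only on the slot of $Q_1$ (resp.\ $Q_2$)---exactly the setting of Proposition 5.1---while the diagonal $\SL_2(\bZ)$ on the third slot acts simultaneously on $Q_1$ and $Q_2$ by the classical linear substitution in $(u,v)$, which sends each ideal to an equivalent one and hence preserves the isomorphism class of $(R; I_1, I_2)$.

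Surjectivity follows by combining Proposition 5.1 with Lemma 3.3. Given $(R; I_1, I_2)$, Proposition 5.1 supplies $B_2'(\bZ)$-reduced quadratic forms $Q_i(u, v) = a_i u^2 + x_i uv + s_i v^2$ realizing $(R, I_i)$, and the middle coefficients automatically satisfy $x_i^2 \equiv D \pmod{4 a_i}$; Lemma 3.3 then produces a cube $A$ with $Q_i(A) = Q_i$ for $i = 1, 2$, which by construction is a preimage of $(R; I_1, I_2)$.

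The delicate step is the fiber count. From the orbit classification inside the proof of Proposition 3.8, every orbit with invariants $(D, a_1, a_2)$ arises uniquely from a divisor $d \,|\, \gcd(D_1, a_1, a_2)$, a primitive cube $A'$ of discriminant $D/d^2$ and invariants $(a_1/d, a_2/d)$, and some $0 \le j \le d - 1$, by applying $\left(\begin{smallmatrix} 1 & j \\ 0 & d \end{smallmatrix}\right)$ on the third slot. My claim is that for each fixed $A'$ the $d$ cubes $\{g_j \cdot A'\}_{0 \le j \le d-1}$ share the same image $(R(D); I_1, I_2)$: the ring is common since the discriminant is $D$ for all $j$, and the ideals of $R(D)$ produced from $Q_i(g_j A')$ via Proposition 5.1 are pairwise related by orientation-preserving isomorphisms of $R(D)$, being the contractions to the suborder $R(D) \subset R(D/d^2)$ of a single pair of over-order ideals canonically attached to $A'$. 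Verifying this compatibility between the determinant-$d$ upper-triangular action and the contraction of ideals from the over-order $R(D/d^2)$ to $R(D)$ is, I expect, the principal obstacle---it amounts to an explicit basis computation in the spirit of the proof of Proposition 5.1, comparing the ordered basis $[a_i, \tau_i^{(j)}]$ of $I_i^{(j)}$ for the $d$ values of $j$. Once this is in hand, each level $d$ contributes exactly $d$ cubes to the fiber, giving $n(R; I_1, I_2) = \sum_{d \,|\, \gcd(D_1, a_1, a_2)} d = \sigma_1(\gcd(D_1, a_1, a_2))$, and the identity $\sum n(R; I_1, I_2) = B(D, a_1, a_2)$ is then a direct reindexing of the decomposition in Proposition 3.8.
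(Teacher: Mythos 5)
Your proposal follows the paper's proof essentially step for step: surjectivity via Proposition 5.1 combined with Lemma 3.3, the fiber count via the orbit decomposition underlying Proposition 3.8 (each admissible divisor $d$ of $\gcd(D_1,a_1,a_2)$ contributing $d$ orbits), and the final identity as a reindexing of the formula for $B(D,m,n)$. The one step you flag as the principal obstacle --- that the $d$ cubes $g_j\cdot A'$ share a single image --- needs no ideal-theoretic contraction argument: the map $(16)$ factors through the pair $\left(Q_1(A),Q_2(A)\right)$, and a determinant-$d$ matrix acting in the third slot sends each $Q_i$ to $d\cdot Q_i$, so $Q_i(g_j\cdot A')=d\,Q_i(A')$ is the reduced form of discriminant $D$ independently of $j$.
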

\begin{proof}
The map is described above and it is easy to see well defined. We first prove the surjectivity of the map. Given a pair $(R; I_1, I_2)$ with $I_i$ an oriented ideal of $R$ and $R/I_i  \cong \mathrm{N}(I_i) \bZ$, by Proposition 5.1, we know that there are two binary quadratic forms
$$Q_i(u,v) = a_i u^2+ b_i uv+ c_i v^2 $$
for $i =1, 2$ with $D= \mathrm{disc}(R) = b_i^2 - 4 a_i c_i$, such that $R$ and $I_i$ are determined by 
\begin{align}
R &= \langle 1, \tau_D \rangle  = \langle 1, \tau_1 \rangle = \langle 1, \tau_2 \rangle ,\notag \\
I_1 &= \langle a_1, \tau_1 \rangle \ \mathrm{and} \ I_2 = \langle a_2, \tau_2 \rangle ,\notag 
\end{align}
where $\tau_D$ is defined in $(11)$, and $\tau_i$ satisfies
$$ \tau_i^2 + b_i \tau_i  +a_i c_i =0$$
for $ i =1,2$.
By the Lemma 3.3, we know that there exists a $2 \times 2 \times 2$ integer cube $A$ such that
$$Q^i_A(u,v) = Q_i(u,v).$$
Under the correspondence of $(16)$, we conclude that the integer cube $A$ maps to the given pair $(R; I_1, I_2)$.  

To prove the second part of the theorem, let $A$ and $A'$ be the two $2\times 2 \times2$ integer cubes which are in the fiber of $(R; I_1, I_2)$ with $\mathrm{N}(I_i)=|a_i|$, then they have the following arithmetic property: $\mathrm{disc}(R) =\mathrm{disc}(A)  =\mathrm{disc}(A')=D $, and the two quadratic forms associated to them are the same $Q^i_A(u,v) = Q^i_{A'}(u,v) = a_i u^2 + b_i uv + c_i v^2$, where $0 \leq b_i \leq 2 |a_i|-1 $ is assumed. Write $D = D_0 D_1^2$. From our general formula of $B(D, m,n)$ in the Shintani zeta function $Z_{\rm{Shintani}}(s_1, s_2, w)$, we know that the fiber counting function is equal to
$$n(R; I_1, I_2)= \sum_{\substack{d|D_1\\ d|a_1, d|a_2}} f(d), \mathrm{where} \ f(d)  = \begin{cases} d  \ \mathrm{if} \ (\frac{b_i}{d})^2 \equiv \frac{D}{d^2} \ (\mathrm{mod}\   \frac{4a_i}{d}), \notag  \\ 0 \ \mathrm{otherwise} . \ \notag \end{cases}$$
Note that as $b_i^2 \equiv D \ ( \mathrm{mod} \ 4a_i  ) $ already holds, so it automatically implies  $(\frac{b_i}{d})^2 \equiv \frac{D}{d^2} \ (\mathrm{mod}\  \frac{4a_i}{d})$ if $d | g.c.d.(D_1, a_1, a_2)$. 
It follows that 
$$n(R; I_1, I_2) = \sigma_1\left( g.c.d.( D_1, |a_1|, |a_2|) \right).$$
Furthermore, the sum of cardinalities over the fixed norms of $\mathrm{N}(I_1) =| a_1|$ and $\mathrm{N}(I_2) =|a_2|$ is exactly
$$B(D, |a_1|, |a_2|) = \# \{ A \in V^{ss}_{\bZ}/\sim : \mathrm{disc}(A) =D,  |\det (M_A^1)| =|a_1|, |\det(M_A^2)| = |a_2|\}.$$
\end{proof}

\bibliography{mumford-tate}
\end{document}